\documentclass[a4paper]{amsart}

\usepackage{hyperref}
\usepackage{tikz}
\hypersetup{colorlinks, urlcolor = blue, linkcolor = blue, citecolor = red}
\usepackage[colorinlistoftodos,bordercolor=orange,backgroundcolor=orange!20,linecolor=orange,textsize=scriptsize]{todonotes}%

\newcommand{\arxiv}[1]{\href{http://www.arXiv.org/abs/#1}{arXiv:#1}}

\usepackage{amsfonts}
\usepackage{amsmath}
\usepackage{amsthm}
\usepackage{amssymb}

\usepackage{graphicx}
\usepackage{subfig}
\usepackage{color}

\DeclareSymbolFont{bbold}{U}{bbold}{m}{n}
\DeclareSymbolFontAlphabet{\mathbbold}{bbold}
\newcommand{\zero}{\mathbbold{0}}
\newcommand{\unit}{\mathbbold{1}}

\newcommand{\maxplus}{{\mathbb{R}_{\max}}}
\newcommand{\maxtimes}{{\mathbb{T}}}

\newcommand{\ext}{\mbox{$\bigwedge$}}

\newcommand{\N}{\mathbb{N}}
\newcommand{\R}{\mathbb{R}}
\newcommand{\C}{\mathbb{C}}
\renewcommand{\S}{\mathcal{S}}

\theoremstyle{definition}
\newtheorem{defn}{Definition}
\newtheorem*{defn*}{Definition}
\theoremstyle{plain}
\newtheorem{thm}{Theorem}
\newtheorem*{thm*}{Theorem}
\newtheorem{prop}[thm]{Proposition}
\newtheorem*{prop*}{Proposition}
\newtheorem{lem}[thm]{Lemma}
\newtheorem*{lem*}{Lemma}
\newtheorem{cor}[thm]{Corollary}
\newtheorem*{cor*}{Corollary}
\theoremstyle{remark}

\newtheorem*{rmk*}{Remark}

\newtheorem*{exa*}{Example}

\DeclareMathOperator{\per}{per}
\DeclareMathOperator{\tr}{tr}

\DeclareMathOperator{\pat}{pat}
\DeclareMathOperator{\Diag}{Diag}
\DeclareMathOperator{\cvx}{cvx}
\DeclareMathOperator{\cav}{cav}
\DeclareMathOperator{\lcav}{lcav}
\DeclareMathOperator{\coef}{coef}

\DeclareMathOperator{\Log}{Log}
\newcommand{\card}[1]{\# #1}
\renewcommand{\geq}{\geqslant}
\renewcommand{\leq}{\leqslant}

\title{Tropical bounds for eigenvalues of matrices}
\author{Marianne Akian \and St\'ephane Gaubert \and Andrea Marchesini}
\address{INRIA and CMAP, \'Ecole Polytechnique, 91128 Palaiseau Cedex France}
\email[M.~Akian]{Marianne.Akian@inria.fr}
\email[S.~Gaubert]{Stephane.Gaubert@inria.fr}
\email[A.~Marchesini]{Andrea.Marchesini@polytechnique.edu}
\date{\today}
\thanks{A.~Marchesini is supported by a PhD fellowship of the doctoral school of \'Ecole Polytechnique. M.~Akian and S.~Gaubert were partially supported by the PGMO program of EDF and Fondation Math\'ematique Jacques Hadamard.}
\keywords{Location of eigenvalues, Ostrowski's inequalities, tropical geometry, parametric optimal assignment, log-majorization}
\subjclass[2010]{14T05}
\begin{document}

\begin{abstract}
Let $\lambda_1,\dots,\lambda_n$ denote the eigenvalues of a $n\times n$ matrix,
ordered by nonincreasing absolute value, and let $\gamma_1 \geq \dots \geq \gamma_n$ denote the tropical eigenvalues of an associated $n\times n$ matrix, obtained
by replacing every entry of the original matrix by its absolute value.
We show that for all $1\leq k\leq n$, $|\lambda_1\dots\lambda_k|
\leq C_{n,k} \gamma_1\dots\gamma_k$, where $C_{n,k}$ is a
combinatorial constant depending only on $k$ and on the pattern
of the matrix. 
This generalizes an inequality by Friedland (1986), corresponding to the special case $k=1$.
\end{abstract}

\maketitle

\section{Introduction}
\subsection{Motivation: bounds of Hadamard, Ostrowski, and P\'olya for the roots of polynomials}
In his memoir on the Graeffe method~\cite{Ostrowski1}, Ostrowski
proved the following result concerning the location of roots
of a complex polynomial of degree $n$, $p(z) = a_n z^n + \dots + a_1 z + a_0$.
Let  $\zeta_1, \cdots, \zeta_n$ denote the complex roots of $p$,
ordered by 
nonincreasing
absolute value, 
and let $\alpha_1 \geq \dots \geq \alpha_n$ denote 
what Ostrowski
called the \emph{inclinaisons num\'eriques} (``numerical inclinations'') of $p$.
These are defined as the exponentials of the opposites of the slopes of the Newton polygon obtained as the upper boundary of the convex hull of the set of points $\{(k,\log |a_k|)\mid 0\leq k\leq n\}$. 
Then, the inequalities
\begin{align}
  \frac{1}{\binom{n}{k}} \alpha_1 \dotsm \alpha_k
  \leq |\zeta_1 \dotsm \zeta_k| \leq \sqrt{\frac{{(k+1)}^{k+1}}{k^k}} \alpha_1 \dotsm \alpha_k \leq 
 \sqrt{e (k+1)} \alpha_1 \dotsm \alpha_k
\label{e-ost}
\end{align}
hold, for all $k \in [n]:=\{1,\dots,n\}$.
The upper bound in~\eqref{e-ost} originated from a work of Hadamard~\cite{hadamard1893}, who proved an inequality of the same form but with the multiplicative
constant $k+1$. Ostrowski attributed to P\'olya
the improved multiplicative constant in the upper bound of~\eqref{e-ost}, and obtained among other results the lower bound in~\eqref{e-ost}.

The inequalities of Hadamard, Ostrowski and P\'olya can be interpreted in the language of tropical geometry~\cite{viro,itenberg,sturmfels2005}). Let us associate to $p$ the 
{\em tropical polynomial function}, obtained by replacing
the sum by a max, and by taking the absolute value of every coefficient,
\[ p^{\times}(x) = \max_{0 \leq k \leq n} |a_k| x^k \enspace .
\]
The {\em tropical roots} of $p^\times$, or for short, of $p$,
are defined to be
the nondifferentiability points of the latter function,
counted with multiplicities
(precise definitions will be given in the next section).
These roots coincide with the $\alpha_i$. Then,
the log-majorization type inequalities
~\eqref{e-ost} control the distance between the multiset of roots of $p$
and the multiset of its tropical roots. 

The interest of tropical roots 
is that they are purely combinatorial objects.
They can be computed in linear time (assuming every arithmetical
operation takes a unit time) in a way which
is robust with respect to rounding errors. 

\subsection{Main result}
It is natural to ask whether tropical methods can still
be used to address other kinds of root location results,
like bounding the absolute values of the eigenvalues
of a matrix as a function of the absolute values of its
entries. We show here that this is indeed the case,
by giving an extension of the Hadamard-Ostrowski-P\'olya inequalities~\eqref{e-ost} to matrix eigenvalues. 

Thus, we consider a matrix $A\in \C^{n\times n}$,
and bound the absolute values of the eigenvalues of $A$ in terms of certain
combinatorial objects, which are {\em tropical eigenvalues}. The
latter are defined as the tropical
roots of a characteristic polynomial equation~\cite{abg04,abg04b}. They are given here
by the nondifferentiability points of the value function of
a parametric optimal assignment problem, depending only on the absolute
values of the entries of the matrix $A$. They can be computed
in $O(n^3)$ time~\cite{gassner}, in a way which is again insensitive
to rounding errors. 

Our main result (Theorem~\ref{upper_bound}) is the following:
given a complex $n \times n$ matrix $A = (a_{i,j})$ with eigenvalues $\lambda_1, \dots, \lambda_n$, ordered by nonincreasing absolute value, the inequality 
\begin{align}
	|\lambda_1 \dotsm \lambda_k| \leq \rho\big(\ext^k_{\per} (\pat A)\big) \,
\gamma_1 \dotsm \gamma_k 
\label{e-fried-gen}
\end{align}
holds for all $k \in [n]$.
Here $\gamma_1\geq \cdots \geq \gamma_n $ are the tropical eigenvalues of $A$,
$\rho$ denotes the Perron root (spectral radius),
the {\em pattern} of $A$, $\pat A$ is a $0/1$ valued matrix,
depending only on the position of non-zero entries of $A$,
and $\ext^k_{\per}(\cdot)$ denotes the $k$th permanental compound
of a matrix. Note that $\rho(\ext^k_{\per} (\pat A))\leq n!/(n-k)!$ for every matrix $A$.

Unlike in the case of polynomial roots, there is no universal
lower bound of $|\lambda_1 \dotsm \lambda_k|$ in terms of
$\gamma_1\dotsm \gamma_k$. However, we establish a lower
bound under additional assumptions (Theorem~\ref{lower_bound}).

\subsection{Related work}
The present inequalities generalize a theorem of Friedland,
who showed in~\cite{friedland}  that
for a nonnegative matrix $A$, we have 
\begin{align}
\rho_{\max}(A) \leq \rho(A) \leq \rho(\pat A)\rho_{\max}(A)
\label{e-shmuel}
\end{align}
where $\rho_{\max}(A)$ is the \emph{maximum cycle mean} of $A$, defined 
to be
\[
	\rho_{\max}(A) = \max_{i_1,\dots,i_\ell} {\left( a_{i_1,i_2} a_{i_2,i_3} \dotsm a_{i_\ell,i_1} \right)}^{1/\ell} \; ,
\]
the maximum being taken over all sequences $i_1,\dots,i_\ell$ 
of distinct elements of $[n]$.
Since $\gamma_1=\rho_{\max}(A)$, 
the second inequality in~\eqref{e-shmuel}
corresponds to the case $k=1$ in~\eqref{e-fried-gen}.

This generalization is inspired by a work of Akian, Bapat and Gaubert~\cite{abg04,abg04b}, dealing with matrices $A=(a_{i,j})$ over the field $\mathbb{K}$ of Puiseux series over $\mathbb{C}$ (and more generally, over fields of asymptotic expansions), equipped with its non-archimedean valuation $v$. 
It was shown in~\cite[Th.~1.1]{abg04} that generically, the valuations of the eigenvalues
of the matrix $A$ coincide with the tropical eigenvalues
of the matrix $v(A)$. Moreover, in the non-generic case, a majorization inequality still holds~\cite[Th.~3.8]{abg04b}. Here, we replace $\mathbb{K}$ by
the field $\mathbb{C}$ of complex numbers, and $v$ by the archimedean
absolute value $z\mapsto |z|$. Then, the majorization inequality
is replaced by a log-majorization inequality, up to a modification
of each scalar inequality by a multiplicative combinatorial constant,
and the generic equality is replaced by a log-majorization-type
lower bound, which requires
restrictive conditions. These results can be understood in the
light of tropical geometry, as the notion of tropical roots used here is
a special case of an amoeba~\cite{gelfand,passare,kapranov}.
Recall that if $\mathbb{K}$
is a field equipped with an absolute value $|\cdot|$, the amoeba of an algebraic
variety $V \subset (\mathbb{K}^*)^n$ is the closure of the 
image of $V$ by the map
which takes the log of the absolute value entrywise.   
Kapranov showed (see~\cite[Theorem 2.1.1]{kapranov})
that when $\mathbb{K}$ is a field equipped with a non-archimedean
absolute value, like the field of complex Puiseux series, the amoeba
of a hypersurface is precisely a {\em tropical hypersurface},
defined as the set of non-differentiability points of a certain convex piecewise affine map. The genericity result of~\cite{abg04,abg04b} can be reobtained
as a special case of Kapranov theorem, by considering the hypersurface
of the characteristic polynomial equation. 

When considering a field with an archimedean absolute value,
like the field of complex numbers equipped with its usual absolute
value, the amoeba of a hypersurface does not coincide any longer
with a tropical hypersurface, however, it can be approximated
by such a hypersurface, called spine,
in particular, Passare and Rullg{\aa}rd~\cite{passare} showed
that the latter is a deformation retract of the former.
In a recent work, Avenda\~{n}o, Kogan, Nisse and Rojas~\cite{rojas}
gave estimates of the distance between a tropical
hypersurface which is a more easily computable variant
of the spine, and the amoeba of a original hypersurface.  
However, it does not seem that the present bounds could be
derived by the same method.

We note that a different generalization of the Hadamard-Ostrowski-P\'olya theorem, dealing with the case of matrix polynomials, not relying on tropical eigenvalues, but thinking of the norm
as a ``valuation'', appeared recently in~\cite{ostags13}, refining
a result of~\cite{POSTA09}. Tropical eigenvalues generally
lead to tighter estimates in the case of structured or sparse matrices.

We finally refer the reader looking for more information on tropical linear algebra
to the monographs~\cite{baccelli,butkovic2010max}.

The paper is organized as follows. In Section~\ref{sec-prelim}, we recall
the definition and properties of tropical eigenvalues. The main result
(upper bound) is stated and proved in Section~\ref{sec-main}. 
The conditional lower bound is proved in Section~\ref{sec-lower}.
In Section~\ref{sec-compar}, we give examples in which the upper bound
is tight (monomial matrices) and not tight (full matrices), 
and compare the bound obtained by applying the present upper bound
to companion matrices with the original upper bound of Hadamard and P\'olya
for polynomial roots.

\section{Preliminary results}\label{sec-prelim}
\subsection{The additive and multiplicative models of the tropical semiring}
The \emph{max-plus semiring} $\maxplus$ is the commutative idempotent semiring
obtained by endowing the set $\mathbb {R}\cup\{-\infty\}$ with the
addition 
\(
a \oplus b = \max (a,b)\)
and the multiplication
\(
 a \odot b = a + b
\).
The zero and unit elements of the max-plus semiring are $\zero=-\infty$ and $\unit = 0$, respectively.

It will sometimes be convenient to work with a
variant of the max-plus semiring, the
\emph{max-times semiring} $\maxtimes$,
consisting of $\R_+$ (the set of nonnegative real numbers),
equipped with 
\(
a \oplus b = \max (a,b)\)
and \(
a \odot b = a \cdot b 
\),
so that the zero and unit elements are now 
$\zero=0$
and $\unit = 1$. Of course, the map $x\mapsto \log x$ is an
isomorphism $\maxtimes \to \maxplus$.
For brevity, we will often indicate
multiplication by concatenation, both
in $\maxplus$ and $\maxtimes$. The term {\em tropical
semiring} will refer indifferently to
$\maxplus$ or $\maxtimes$. Whichever
structure is used should always be clear from
the context.

\subsection{Tropical polynomials}
We will work with formal polynomials over a semiring $( \S, \oplus, \odot )$,
i.e.\ with objects of the form
\[
    p = \bigoplus_{k \in \N} a_k X^k, \qquad
a_k \in \S ,
\qquad 
\card{\left\{ k \mid a_k \neq \zero \right\}} < \infty.
\]

The set $\S[X]$ of all formal polynomials over $\S$ in the indeterminate $X$ is itself a semiring when endowed with the usual addition and multiplication (Cauchy product) of polynomials.

The {\em polynomial function}  $x\mapsto p(x), \S \to \S$, determined by the formal polynomial $p$, is defined by
\[
 p(x) = \bigoplus_{k=0}^n a_k \odot x^{\odot k} \; ,
\]
where $n$ is the degree of the polynomial.

When $\S$ is the max-times semiring $\maxtimes$ or the max-plus semiring $\maxplus$, the polynomial function becomes respectively
\[
p(x) = \max_{0 \leq k \leq n} a_k x^k, \qquad x \in \R_+
\]
or
\begin{align}
p(x) = \max_{0 \leq k \leq n} a_k + kx, \qquad x \in \R \cup \{-\infty\} \;  .
\label{e-def-evalp}
\end{align}

We will call \emph{max-plus polynomial} a polynomial over the semiring $\maxplus$,
and \emph{max-times polynomial} a polynomial over $\maxtimes$.
Both max-plus and max-times polynomials are referred to as \emph{tropical polynomials}.

Being an upper envelope of convex functions, tropical polynomial functions are convex and piecewise differentiable.
In particular, max-plus polynomial functions are piecewise linear.

\if{
\begin{figure}[h]
	\centering
	\subfloat[$p = -1X^3 \oplus 0X^2 \oplus 2X \oplus 1$]{
	\centering
	\includegraphics[width=.45\textwidth]{figures/polyfun1}
	}
	\subfloat[$p = -1X^3 \oplus -1X^2 \oplus 2X \oplus 1$]{
	\centering
	\includegraphics[width=.45\textwidth]{figures/polyfun2}
	}
\end{figure}}\fi

Different tropical polynomials can have the same associated polynomial function.
This lack of injectivity can be understood with the help of 
convex analysis results.
Let $f \colon \R \to \overline{\R}$ be an extended real-valued function.
The \emph{l.s.c.\ convex envelope} (or also \emph{convexification}) of $f$ is the function $\cvx f \colon \R \to \overline{\R}$ defined as
\[
 (\cvx f) (x) = \sup \left\{ g(x) \mid g \mbox{ convex l.s.c.}, \; g \leq f \right\} \;  .
\]
That is, $\cvx f$ is the largest convex l.s.c.\ minorant of $f$.
Analogously we define the \emph{u.s.c.\ concave envelope} (or \emph{concavification}) of $f$
as the smallest u.s.c.\ concave majorant of $f$, and we denote it by $\cav f$.

For any (formal) max-plus polynomial we define an extended real-valued function on $\R$ that represents its coefficients:
more precisely, to the max-plus polynomial ${p = \bigoplus_{k \in \N} a_k X^k}$ we associate the function
\[
 \coef p \colon \R \to \overline{\R} \qquad (\coef p)(x) = 
 \begin{cases}
  a_k &\mbox{if } x = k \in \N \\
  -\infty &\mbox{otherwise.}
 \end{cases}
\]
It is clear from~\eqref{e-def-evalp} that the max-plus polynomial
function $x\mapsto p(x)$ is 
nothing but the Legendre-Fenchel transform of the map $-\coef p$.

\begin{defn}
 Let $p = \bigoplus_{k=0}^{n} a_k X^k$ be a max-plus polynomial.
 The \emph{Newton polygon} $\Delta(p)$ of $p$ is the graph of the function $\cav (\coef p)$ restricted to the interval where it takes finite values.
 In other terms, the Newton polygon of $p$ is the upper boundary of the two-dimensional convex hull
 of the set of points $\left\{ (k, a_k) \mid 0 \leq k \leq n, a_k \neq -\infty \right\}$.
\end{defn}
The values $\cav (\coef p)(0), \dots, \cav (\coef p)(n)$ are called the \emph{concavified coefficients} of $p$,
and they are denoted by $\overline{a}_0, \dots, \overline{a}_n$,
or alternatively by $\cav (a_0),$ \dots, $\cav (a_n)$.
An index $k$ such that $a_k = \overline{a}_k$ (so that the point
$(k,a_k)$ lies on $\Delta(p)$) will be called
a \emph{saturated index}.
The polynomial $\overline{p} = \bigoplus_{k = 0}^{n} \overline{a}_k X^{k}$ is called the
\emph{concavified polynomial} of $p$.
The correspondence between a polynomial and its concavified is denoted by $\cav \colon p \mapsto \overline{p}$.

It is known that the Legendre-Fenchel transform of a map
depends only on its l.s.c.\ convex envelope; therefore,
we have the following elementary result.
\begin{prop}[{\cite[Chap~12, p.~104]{rockafellar}}]
Two max-plus polynomials have the same associated polynomial function
 if and only if they have the same concavified coefficients, or equivalently
the same Newton polygons.\qed
\end{prop}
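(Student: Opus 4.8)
The plan is to recast the statement in terms of the Legendre--Fenchel transform, so that it becomes a direct consequence of the elementary fact recalled just above. We may assume $p$ and $q$ have the same degree $n$, since comparing the two polynomial functions as $x\to+\infty$ forces the degrees (and leading coefficients) to agree; write $p=\bigoplus_{k=0}^n a_kX^k$ and $q=\bigoplus_{k=0}^n a'_kX^k$. Set $h_p:=-\coef p$, so that $h_p(t)=-a_k$ for $t=k\in\{0,\dots,n\}$ and $h_p(t)=+\infty$ otherwise, and define $h_q$ similarly. By~\eqref{e-def-evalp}, the polynomial function of $p$ is $p(x)=\max_{0\le k\le n}(a_k+kx)=\sup_{t\in\R}\bigl(xt-h_p(t)\bigr)=h_p^*(x)$, the convex conjugate of $h_p$, and likewise for $q$. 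Since $h_p$ is nowhere $-\infty$ and finite at $t=n$, it is proper, and as its effective domain is finite it is minorised by an affine function; the same holds for $h_q$.

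First I would dispatch the two implications using the identity $\cvx h_p=-\cav(\coef p)$: the largest l.s.c.\ convex minorant of $h_p=-\coef p$ is the opposite of the smallest u.s.c.\ concave majorant of $\coef p$. If $p$ and $q$ have the same concavified coefficients, then $\cav(\coef p)=\cav(\coef q)$, hence $\cvx h_p=\cvx h_q$, and since the Legendre--Fenchel transform of a map depends only on its l.s.c.\ convex envelope we get $p(\cdot)=h_p^*=(\cvx h_p)^*=(\cvx h_q)^*=h_q^*=q(\cdot)$. Conversely, if $p(\cdot)=q(\cdot)$, then taking conjugates and invoking the Fenchel--Moreau biconjugation theorem (applicable thanks to properness and the existence of affine minorants) yields $\cvx h_p=h_p^{**}=\bigl(p(\cdot)\bigr)^*=\bigl(q(\cdot)\bigr)^*=h_q^{**}=\cvx h_q$, that is $\cav(\coef p)=\cav(\coef q)$; restricting to $\{0,\dots,n\}$ gives $\overline a_k=\overline a'_k$ for every $k$. (Alternatively, one can avoid biconjugation here by noting directly that $\overline a_k=\cav(\coef p)(k)=\inf_{c\in\R}\bigl(ck+p(-c)\bigr)$, which already exhibits $\overline a_k$ as a function of the polynomial function alone.)

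It then remains to observe that the three conditions ``same concavified coefficients'', ``same function $\cav(\coef p)$'' and ``same Newton polygon'' coincide. The function $\cav(\coef p)$ equals $-\infty$ outside $[0,n]$ and, on $[0,n]$, is the piecewise-affine map through the points $(k,\overline a_k)$; because the extreme points of the convex hull of $\{(k,a_k)\mid 0\le k\le n\}$ have integer abscissas, this map is entirely determined by the tuple $(\overline a_0,\dots,\overline a_n)$, and its restriction to the interval where it is finite is, by definition, the Newton polygon $\Delta(p)$. Hence the equality of concavified coefficients, of concave envelopes and of Newton polygons are equivalent, completing the chain. The whole argument is essentially bookkeeping; the only points requiring a little care are checking that $h_p$ is proper with an affine minorant (needed to invoke biconjugation) and that the concavified coefficients recover the full envelope function, which is exactly where the integrality of the abscissas is used.
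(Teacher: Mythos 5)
Your argument is correct and follows exactly the route the paper has in mind: it identifies the max-plus polynomial function with the Legendre--Fenchel transform of $-\coef p$ and invokes the standard convex-analysis facts (the conjugate depends only on the l.s.c.\ convex envelope, plus Fenchel--Moreau biconjugation for the converse) that the paper simply cites from Rockafellar, adding only routine bookkeeping for the Newton-polygon equivalence. No gaps worth flagging; the degree/leading-coefficient normalization and the properness/affine-minorant checks you make are exactly the right details to verify.
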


Consider the isomorphism of semirings which sends a max-times polynomial  $p = \bigoplus_{k=0}^{n} a_k X^k$ to the max-plus
polynomial $\Log p:= \bigoplus_{k=0}^n (\log a_k)X^k$. 
We define the
\emph{log-concavified polynomial} of $p$
 as ${\widehat{p} = \Log^{-1} \circ \cav \circ \Log (p)}$.
 We also denote it by $\lcav p$; its coefficients are called the \emph{log-concavified coefficients} of $p$,
 and they are denoted by $\widehat{a}_0, \dots, \widehat{a}_n$,
 or alternatively by $\lcav (a_0), \dots, \lcav (a_n)$.

\subsection{Roots of tropical polynomials}
The \emph{roots} of a tropical polynomial are defined as the points of non-differentiability of its associated polynomial function. 
So, if $p = \bigoplus_{k=0}^n a_k X^k$ is a max-plus polynomial, then $\alpha \in \R\cup\{-\infty\}$ is a root of $p$ if and only if the maximum 
\[
\max_{0 \leq k \leq n} a_k + k\alpha
\]
is attained for at least two different values of $k$.
The \emph{multiplicity} of a root is defined
to be the difference between the largest and the smallest value of $k$ 
for which the maximum is attained. The same definitions
apply, mutatis mutandis, to max-times polynomials. In particular,
if $p$ is a max-times polynomial,
the tropical roots of $p$ are the images of the tropical roots of $\Log p$ by
the exponential map, and the multiplicities are preserved. 

Cuninghame-Green and Meijer~\cite{cuninghame80} showed
that a max-plus polynomial function of degree $n$,
$p(x) = \bigoplus_{k=0}^n a_k x^k$, can be factored uniquely
as 
\[
p(x) = a_n (x \oplus \alpha_1)\dots (x\oplus \alpha_n) \;  .
\]
The scalars $\alpha_1,\dots,\alpha_n$ are precisely the tropical
roots, counted with multiplicities.

Because of the duality arising from the interpretation of tropical polynomial
functions as Fenchel conjugates,
the roots of a max-plus polynomial are related to its Newton polygon.
The following result is standard.
\begin{prop}[See e.g.\ {\cite[Proposition 2.10]{abg04b}}]
 Let $p \in \maxplus [X]$ be a max-plus polynomial.
 The roots of $p$ coincide with the opposite of the slopes of the Newton polygon of $p$.
 The multiplicity of a root $\alpha$ of $p$ coincides with the length of the interval
 where the Newton polygon has slope $-\alpha$. 
\end{prop}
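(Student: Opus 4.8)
The plan is to reduce at once to the concavified polynomial and then to read off the roots from the concavity of the coefficient sequence. First I would invoke the Proposition of Rockafellar above: $p$ and its concavified polynomial $\overline p = \bigoplus_{k=0}^n \overline a_k X^k$ have the same associated polynomial function, and by construction the same Newton polygon; moreover the roots and their multiplicities are the same for $p$ and for $\overline p$ (a non-saturated index can never attain the maximum in $\max_k(a_k+k\alpha)=p(\alpha)$, since $a_k+k\alpha<\overline a_k+k\alpha\le p(\alpha)$ there, while the least and greatest indices that do attain it, being vertices of $\Delta(p)$, are saturated, so the two extreme maximizing indices — hence their difference — are unchanged). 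Thus I may assume $p=\overline p$, i.e.\ that $a_0,\dots,a_n$ is a concave sequence (its successive differences are nonincreasing, though the sequence itself need not be monotone), and, to fix ideas, that $a_0\neq\zero$ and $a_n\neq\zero$, so that $\Delta(p)$ is the graph of a concave piecewise affine function on $[0,n]$. Write $s_k:=a_k-a_{k-1}$ for $1\le k\le n$, the slope of $\Delta(p)$ on $[k-1,k]$, and set $s_0:=+\infty$, $s_{n+1}:=-\infty$; concavity of $(a_k)$ is exactly the chain $s_1\ge s_2\ge\dots\ge s_n$.

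Next, for a fixed $x\in\R$ I would examine the finite sequence $k\mapsto a_k+kx$ on $\{0,\dots,n\}$. Its consecutive differences are $s_{k+1}+x$, nonincreasing in $k$, so this sequence is concave; hence the set $K(x)$ of indices at which the maximum $p(x)=\max_{0\le k\le n}(a_k+kx)$ is attained is a nonempty interval of consecutive integers, and comparing the value at $k$ with those at $k-1$ and $k+1$ gives
\[
 K(x) = \{\, k \mid s_k \geq -x \geq s_{k+1}\,\} \enspace .
\]
From this description both assertions drop out. If $-x$ is not one of the slopes $s_1,\dots,s_n$, exactly one index satisfies $s_k\ge -x\ge s_{k+1}$ — a breakpoint of $\Delta(p)$, or one of the two endpoints — so $K(x)$ is a singleton and $x$ is not a root. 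If $-x$ does occur as a slope, let $\sigma_1>\dots>\sigma_m$ be the distinct slopes, with $\sigma_i$ realized along a sub-interval $[L_i,L_i+\ell_i]$ of $\Delta(p)$ of length $\ell_i$ (so $L_1=0$, $L_{i+1}=L_i+\ell_i$, $\sum_i\ell_i=n$), and take $-x=\sigma_i$; then the display yields $K(x)=\{L_i,L_i+1,\dots,L_i+\ell_i\}$. Hence $x=-\sigma_i$ is a root of $p$, and its multiplicity — the difference between the largest and the smallest element of $K(x)$ — equals $\ell_i$, the length of the interval on which $\Delta(p)$ has slope $-x$. This settles both claims for roots in $\R$.

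Finally I would dispose of the degenerate ends. If $a_0=\zero$, let $j$ be the least index with $a_j\neq\zero$; then $p$ is divisible by $X^{\odot j}$, say $p=X^{\odot j}\odot q$ with $q$ of nonzero constant term, so $\zero=-\infty$ is a root of $p$ of multiplicity $j$, the remaining roots are those of $q$ and are governed by $\Delta(q)$, which is $\Delta(p)$ translated, and $j$ is precisely the length of the portion of $[0,n]$ not covered by the finite Newton polygon — naturally read as an edge of ``slope $+\infty=-\zero$''. If $a_n=\zero$, passing to $\overline p$ shrinks the effective range of exponents to $[0,\max\{k\mid a_k\neq\zero\}]$ and the argument above applies verbatim there. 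Everything is also consistent with the Cuninghame--Green--Meijer factorization $p(x)=a_n(x\oplus\alpha_1)\cdots(x\oplus\alpha_n)$, since each factor $x\oplus\alpha_i$ contributes one segment of slope $-\alpha_i$ and unit length to $\Delta(p)$. I do not expect a genuine obstacle here: the only points needing care are the conventions at the two ends of the exponent range and the root at $-\infty$, whereas the crux is the single observation that concavity of $(a_k)_k$ makes $k\mapsto a_k+kx$ concave, so that its maximizers form the contiguous block displayed above.
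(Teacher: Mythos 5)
Your argument is correct, and it supplies a self-contained proof of a statement the paper does not actually prove: the text labels it as standard and simply cites \cite[Proposition 2.10]{abg04b}, remarking only that it follows from the interpretation of tropical polynomial functions as Fenchel conjugates. Your route is the discrete, elementary incarnation of exactly that duality: after reducing to the concavified polynomial, the observation that $k\mapsto a_k+kx$ is concave (so its maximizers form the contiguous block $\{k\mid s_k\geq -x\geq s_{k+1}\}$) is precisely the finite-dimensional subdifferential computation for the conjugate of $-\coef p$, carried out by hand. The two points that genuinely needed care are handled properly: (i) the reduction step, where you check that the smallest and largest maximizing indices for $p$ and for $\overline{p}$ agree because non-saturated indices cannot attain the maximum while the extreme maximizers are breakpoints of $\Delta(p)$ and hence saturated, so the multiplicity (defined from the formal coefficients of $p$) is unchanged; and (ii) the conventions at the ends of the exponent range, in particular the root $\zero=-\infty$ of multiplicity $j=\min\{k\mid a_k\neq\zero\}$, which you correctly peel off by factoring $p=X^{\odot j}\odot q$ and which matches the Cuninghame--Green--Meijer factorization quoted in the paper. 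Compared with invoking general Legendre--Fenchel facts, your approach buys elementarity and makes the multiplicity statement transparent; what it gives up is only brevity.
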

This proposition is illustrated in the following figure:
\begin{center}
\begin{tikzpicture}[scale=0.5]
  \draw[step=1cm,gray,very thin] (-0.4,-1.4) grid (3.4,2.4);
\draw[thick] (0,1) -- (0,-1.5); 
\draw[thick] (0,1) -- (1,2); 
\draw[thick] (1,2) -- (3,-1);
\draw[thick] (3,-1.5) -- (3,-1);
\fill (0,1) circle (3pt);
\fill (1,2) circle (3pt);
\fill (2,0) circle (3pt);
\fill (3,-1) circle (3pt);
\draw (-0.5,0) -- (3.5,0);
\draw (0,-1.5) -- (0,2.5);
  \filldraw[fill=green!20!white, draw=black, nearly transparent]
    (0,-1.5) -- (0,1) -- (1,2) -- (3,-1) -- (3,-1.5) -- cycle;
\end{tikzpicture}
\end{center}
Here, the polynomial is $p = -1X^3 \oplus 0X^2 \oplus 2X \oplus 1$. The tropical roots are $-1$ (with multiplicity $1$) and $1.5$ (with multiplicity $2$).

\begin{cor}
 \label{tropical_vieta}
 Let $p = \bigoplus_{k=0}^n a_k X^k$ be a max-plus polynomial, and let
 $\alpha_1 \geq \cdots \geq \alpha_n$ be its roots, counted with multiplicities.
 Then the following relation for the concavified coefficients of $p$ holds:
 \[
  \overline{a}_{n-k} = a_n + \alpha_1 + \dots + \alpha_k  \qquad \forall k \in 
[n].
 \]
 Analogously, if $p = \bigoplus_{k=0}^n a_k X^k$ is a max-times polynomial
 with roots ${\alpha_1 \geq \cdots \geq \alpha_n}$,
 then the following relation for its log-concavified coefficients holds:
 \[
  \widehat{a}_{n-k} = a_n \alpha_1 \dotsm \alpha_k \qquad \forall k \in [n].
 \]
\end{cor}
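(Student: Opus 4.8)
The plan is to derive both identities from the previous two propositions, namely the factorization of Cuninghame-Green and Meijer and the description of the roots as the opposites of the slopes of the Newton polygon. Since the max-times statement follows from the max-plus statement by applying the semiring isomorphism $\Log$ (which sends log-concavified coefficients to concavified coefficients and tropical roots to their logarithms), it suffices to treat the max-plus case.

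First I would recall that $\overline{a}_j = \cav(\coef p)(j)$ for $j \in [n]$, so that the points $(j,\overline{a}_j)$ all lie on the Newton polygon $\Delta(p)$, which is a concave piecewise-affine curve on $[0,n]$. By the proposition relating roots to slopes, the slope of $\Delta(p)$ on the segment from abscissa $n-j$ to $n-j+1$ is exactly $-\alpha_j$: indeed, ordering the roots by nonincreasing value $\alpha_1 \geq \cdots \geq \alpha_n$ corresponds to ordering the slopes $-\alpha_1 \leq \cdots \leq -\alpha_n$ by nondecreasing value, and a concave curve has nondecreasing slopes read from right to left, i.e.\ from abscissa $n$ down to $0$. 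Concretely, reading the polygon from the right endpoint $(n,\overline{a}_n) = (n,a_n)$ leftward, the first unit segment has slope $-\alpha_1$, the next $-\alpha_2$, and so on. (One should note $\overline{a}_n = a_n$, since the top point of the convex hull at abscissa $n$ is $(n,a_n)$ itself, $n$ being always a saturated index.)

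Then the identity is just a telescoping sum along these segments:
\[
\overline{a}_{n-k} = \overline{a}_n + \sum_{j=1}^{k}\bigl(\overline{a}_{n-j} - \overline{a}_{n-j+1}\bigr) = a_n + \sum_{j=1}^{k} \alpha_j \enspace ,
\]
where each increment $\overline{a}_{n-j} - \overline{a}_{n-j+1}$ equals the negative of the slope of the segment from $n-j$ to $n-j+1$, that is $\alpha_j$. This gives the claimed formula for all $k \in [n]$. Alternatively, and perhaps more cleanly, one can read off the concavified coefficients directly from the factorization $p(x) = a_n(x \oplus \alpha_1)\cdots(x \oplus \alpha_n)$: expanding this max-plus product, the coefficient of $x^{n-k}$ in the resulting polynomial function is $a_n$ plus the max over all $k$-subsets $S$ of $\{1,\dots,n\}$ of $\sum_{i \in S}\alpha_i$, which, since the $\alpha_i$ are sorted in nonincreasing order, equals $a_n + \alpha_1 + \cdots + \alpha_k$; and the coefficients of the polynomial function $p$ coincide with the concavified coefficients $\overline{a}_{n-k}$.

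The only genuine subtlety, and the step I would be most careful about, is the bookkeeping of the orientation: matching the ordering $\alpha_1 \geq \cdots \geq \alpha_n$ of the roots with the left-to-right traversal of $\Delta(p)$, so that the index shift $n-k$ comes out correctly, and handling the edge cases where some $a_k = -\infty$ (in which case the corresponding index is simply not a vertex of the polygon, but the formula is unaffected since $\overline{a}_{n-k}$ is still the value of the concave envelope there). Everything else is a routine telescoping or a routine expansion of a tropical product, and the max-times version is obtained verbatim by transporting through $\Log$, replacing sums by products and $-\infty$ by $0$.
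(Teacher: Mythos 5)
Your argument is correct and follows exactly the route the paper intends: the corollary is stated as a direct consequence of the preceding proposition identifying the roots with the opposites of the slopes of the Newton polygon, and your telescoping of the unit increments $\overline{a}_{n-j}-\overline{a}_{n-j+1}=\alpha_j$ (with the orientation of the concave polygon handled correctly, and $\overline{a}_n=a_n$ since $n$ is saturated) is precisely that deduction, with the max-times case transported through $\Log$. The alternative expansion of the Cuninghame-Green--Meijer factorization is also fine, provided one adds the one-line remark that the expanded coefficient sequence $k\mapsto a_n+\alpha_1+\dots+\alpha_k$ is already concave, so it coincides with the concavified coefficients of $p$.
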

As pointed out in the introduction, the tropical roots, 
in the form of ``slopes of Newton polygons'',
were already apparent in the works of Hadamard~\cite{hadamard1893}
and 
 Ostrowski~\cite{Ostrowski1}.

We next associate a tropical polynomial to a complex polynomial.
\begin{defn}
 Given a polynomial $p \in \C[z]$,
\[
 p = \sum_{k=0}^{n} a_k z^k \;  ,
\]
we define its \emph{max-times relative} $p^{\times} \in \maxtimes [X]$ as
\[
 p^{\times} = \bigoplus_{k=0}^n |a_k| X^k \;  ,
\]
and its \emph{max-plus relative} $p^+ \in \maxplus [X]$ as
\[ p^+=\Log p^\times
\enspace .
\]
\end{defn}

We can now define the tropical roots of an ordinary polynomial.

\begin{defn}[Tropical roots]
 The \emph{tropical roots} of a polynomial $p \in \C[z]$ are the roots $\alpha \in \R_+$ of its max-times relative $p^{\times}$.
\end{defn}

\subsection{Tropical characteristic polynomial and tropical eigenvalues}
In this section we 
recall the definition
of the eigenvalues of a tropical matrix~\cite{abg04,abg04b}.
We start from the notion of permanent,
which is defined for matrices with entries
in an arbitrary semiring $( \S, \oplus, \odot )$.
\begin{defn}
	The \emph{permanent} of a matrix $A = (a_{i,j})
\in \S^{n \times n}$ is defined as
	\[ 
	\per_{\S} \colon \S^{n \times n} \to \S \;, \qquad
	\per_{\S} A = \bigoplus_{\sigma \in S_n} 
a_{1,\sigma(1)}\cdots a_{n,\sigma(n)}\;,%
	\]
where $S_n$ is the set of permutations of $[n]$.
\end{defn}
In particular, if $\S= \C$, 
	\[
	\per_\C A = \per A:= \sum_{\sigma \in S_n} \prod_{i = 1}^n a_{i,\sigma(i)}\; 
	\]
is the usual permanent, whereas if $S=\maxtimes$, 
we get
the {\em max-times permanent}
	\[ \per_{\maxtimes} A = \max_{\sigma \in S_n} \prod_{i = 1}^{n} a_{i, \sigma(i)} \; . \]
Computing the usual permanent is a difficult problem
which was indeed proved to be $\sharp$P-complete by Valiant~\cite{valiant}.
However, computing a max-times permanent
is nothing but solving 
an optimal assignment problem (a number of
polynomial time algorithms are known for this problem,
including $O(n^3)$ strongly polynomial algorithms,
see~\cite{burkard} for more background).
\begin{defn}
	The \emph{tropical characteristic polynomial} of a max-times matrix 
	${A \in \maxtimes^{n \times n}}$ is the tropical polynomial
	\[ q_A \in \maxtimes[X] \; , \qquad q_A = \per_{\maxtimes[X]} (A \oplus XI) \; ,\]
	where $I$ is the max-times identity matrix.
\end{defn}
\begin{exa*}
	\begin{align*}
	A = \begin{pmatrix} a & b \\ c & d \end{pmatrix}\;, \qquad
	A \oplus XI = \begin{pmatrix} a \oplus X & b \\ c & d \oplus X \end{pmatrix} \\
	q_A = (a \oplus X)(d \oplus X) \oplus bc = X^2 \oplus (a \oplus d)X \oplus (ad \oplus bc)
	\end{align*}
\end{exa*}
Note that an alternative, finer, definition~\cite{guterman}
of the tropical characteristic polynomial relies on {\em tropical determinants} which unlike permanents take into account signs.
This is relevant mostly when
considering real
eigenvalues, instead of complex
eigenvalues as we do here.

We can give explicit expressions for the coefficients of the tropical characteristic polynomial: if $A=(a_{i,j})\in \maxtimes^{n \times n}$ and we write 
$q_A = X^n \oplus c_{n-1}X^{n-1} \oplus \dots \oplus c_0$,
then it is not difficult to see that
\begin{align*}
	c_{n-k} &= \bigoplus_{\substack{I \subset [n] \\ \card{I} = k}} \bigoplus_{\sigma \in S_I} 
	a_{1, \sigma(1)} \dots a_{n,\sigma(n)}
= \bigoplus_{\substack{I \subset [n] \\ \card{I} = k}} \per_{\maxtimes} A[I,I]
\quad \forall k\in [n]\;,
\end{align*}
where $S_I$ is the group of permutations of the set $I$, and
$A[I,J]$ is the $k \times k$ submatrix obtained by selecting from $A$ the rows $i \in I$ and the columns $j \in J$.
It will be convenient to write the coefficients of $q_A$ in terms of the exterior powers of $A$.

\begin{defn}
The \emph{$k$-th exterior power} 
or \emph{$k$-compound} of a matrix $A \in \C^{n \times n}$ is the matrix
${\ext^k A \in \C^{\binom{n}{k} \times \binom{n}{k}}}$
whose rows and columns are indexed by the subsets of cardinality $k$ of 
$[n]$,
and whose entries are defined as
\begin{equation} \label{ext_pow}
	{\left( \ext^k A \right)}_{I,J} = \det A[I,J] \; .
\end{equation}
The $k$-th trace of $A$ is then defined as 
\[ \tr^k A = \tr \left( \ext^k A \right) = \sum_{\substack{I \subset [n] \\ \card{I} = k}} \det A[I,I] \]
for all $k \in [n]$.
If we replace the determinant with the permanent in Equation~\eqref{ext_pow},
we get the \emph{$k$-th permanental exterior power} of $A$, denoted by $\ext^k_{\per} A$.

Analogously, for a matrix $A \in \maxtimes^{n \times n}$, we define
the \emph{$k$-th tropical exterior power} of $A$ to be the matrix 
${\ext^k_{\maxtimes} A \in  \maxtimes^{\binom{n}{k} \times \binom{n}{k}}}$ whose entries are
\[ {\left( \ext^k_{\maxtimes} A \right)}_{I,J} = \per_{\maxtimes} A[I,J] \]
for all subsets $I, J \subset [n]$ of cardinality $k$.
The \emph{$k$-th tropical trace} of $A$ is defined as 
\begin{equation}\label{tropical_trace}
	\tr^k_{\maxtimes} A = \tr_{\maxtimes} \left( \ext^k_{\maxtimes} A \right) 
	= \max_{\substack{I \subset [n] \\ \card{I} = k}} \per_{\maxtimes} A[I,I] \; .
\end{equation}
\end{defn}

One readily checks that the coefficients of $q_A$
are given by $c_{n-k} = \tr^k_{\maxtimes} A$.

\begin{defn}[Tropical eigenvalues]
	Let $A \in \maxtimes^{n \times n}$ be a max-times matrix.
	The \emph{(algebraic) tropical eigenvalues} of $A$ are the roots
	of the tropical characteristic polynomial $q_A$.
	
	Moreover, we define the tropical eigenvalues of a complex matrix $A = (a_{i,j}) \in \C^{n \times n}$
	as the tropical eigenvalues of the associated max-times matrix $|A| = (|a_{i,j}|)$.
\end{defn}

\begin{rmk*}
No polynomial algorithm is known to compute all the coefficients of the tropical characteristic polynomial
(see e.g.~\cite{butkoviclewis}).
However, the \emph{roots} of $q_A$, only depend on the associated polynomial function, and can be computed by solving at most $n$ optimal assignment problems,
leading to the complexity bound of $O(n^4)$ of Burkard and Butkovi\v{c}~\cite{burkard-butkovic}. Gassner and Klinz~\cite{gassner} showed
that this can be reduced in $O(n^3)$ using parametric optimal assignment techniques.
\end{rmk*}

Before proceeding to the statement of our main result, we need a last definition. 

\begin{defn}
Denote by %
$\Omega_n$ the set of all cyclic permutations of %
$[n]$. 
For any $n \times n$ complex matrix $A = (a_{i,j})$
and for any bijective map $\sigma$ %
from a subset $I\subset[n]$ to a subset $J\subset[n]$,
we define the \emph{weight} of $\sigma$ with respect to $A$ as
\[
 w_A(\sigma) = \prod_{i\in I} a_{i,\sigma(i)} \;  .
\]
If $A$ is a nonnegative matrix, meaning a matrix with nonnegative real entries,
$\sigma$ is a permutation of $I$, and $I$ has cardinality $\ell$,
 we also define the \emph{mean weight} of $\sigma$ with respect to $A$ as
\[
 \mu_A(\sigma) = w_A(\sigma)^{1/\ell} = \Bigl(\prod_{i\in I} a_{i,\sigma(i)}\Bigr)^{1/\ell} \;  ,
\]
and  the \emph{maximal cycle mean} of %
$A$ as %
\[
 \rho_{\max} (A) = \max_{\sigma \in \Omega_n} \mu_A(\sigma) \;  .
\]
If we interpret $A$ as the adjacency matrix of a directed graph with weighted edges,
then $\rho_{\max}(A)$ represents the maximum mean weight of a cycle over the graph.
\end{defn}

\begin{rmk*}
Since any permutation can be factored into a product of cycles,
we can equivalently define the maximal cycle mean
in terms of general permutations instead of cycles:
\[
 \rho_{\max} (A) = \max_{1 \leq \ell \leq n} \max_{\card{I}=\ell} \;\max_{\sigma \in S_I} \mu_A(\sigma) \;  .
\]
\end{rmk*}

\begin{prop}[Cuninghame-Green, \cite{cuninghame83}]\label{rhoTequal}
 For any $A \in \maxtimes^{n \times n}$, the largest root $\rho_{\maxtimes} (A)$
of the tropical
 characteristic polynomial $q_A$ is equal to the maximal cycle mean $\rho_{\max} (A)$.
\end{prop}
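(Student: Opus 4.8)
The plan is to evaluate the largest root of $q_A$ directly from its coefficients and to recognize the resulting expression as the maximal cycle mean. Write $q_A = X^n \oplus c_{n-1}X^{n-1}\oplus\cdots\oplus c_0$; as recalled above, $c_n = \unit = 1$ and $c_{n-k} = \tr^k_{\maxtimes} A = \max_{\card{I}=k}\per_{\maxtimes} A[I,I]$ for every $k\in[n]$.

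First I would identify the largest root of an arbitrary monic max-times polynomial. Its associated polynomial function is $q_A(x) = \max\bigl(x^n,\ \max_{1\le k\le n} c_{n-k}x^{n-k}\bigr)$, and the leading monomial $x^n$ is at least as large as every other term exactly when $x^n \ge c_{n-k}x^{n-k}$ for all $k\ge 1$, i.e.\ when $x \ge (c_{n-k})^{1/k}$ for all such $k$ (with the convention $\zero^{1/k} = \zero$). Hence $\{x\ge 0 : q_A(x) = x^n\} = [\theta,\infty)$, where $\theta := \max_{1\le k\le n}(c_{n-k})^{1/k}$. On $(\theta,\infty)$ the function $q_A$ agrees with the smooth monomial $x\mapsto x^n$, so it has no point of non-differentiability there; on the other hand, when $\theta>0$ the function is non-differentiable at $\theta$, since immediately to the left of $\theta$ the maximum is realised by some monomial $c_{n-k}x^{n-k}$ of degree $n-k<n$, whose left derivative at $\theta$ equals $(n-k)\theta^{n-1}$ (using $c_{n-k}\theta^{n-k}=\theta^n$, by continuity of $q_A$), which differs from the right derivative $n\theta^{n-1}$. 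The case $\theta=0$ is degenerate: then all $c_{n-k}$ vanish, $q_A(x)=x^n$, whose unique root is $0$ with multiplicity $n$, so the largest root is again $\theta$. In all cases $\rho_{\maxtimes}(A) = \theta = \max_{1\le k\le n}(c_{n-k})^{1/k}$. (Alternatively, Corollary~\ref{tropical_vieta} gives $\rho_{\maxtimes}(A) = \widehat{c}_{\,n-1}$, and the value at $n-1$ of the log-concavified coefficient sequence, whose endpoint value at $n$ equals $\unit$, is precisely $\max_k (c_{n-k})^{1/k}$.)

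It then remains to substitute the combinatorial description of the coefficients. From $c_{n-k} = \max_{\card{I}=k}\per_{\maxtimes} A[I,I]$ and $\per_{\maxtimes} A[I,I] = \max_{\sigma\in S_I}\prod_{i\in I} a_{i,\sigma(i)} = \max_{\sigma\in S_I} w_A(\sigma)$, and moving the $k$-th root inside the finite maxima, we obtain
\[
\rho_{\maxtimes}(A) \;=\; \max_{1\le k\le n}\ \max_{\card{I}=k}\ \max_{\sigma\in S_I} w_A(\sigma)^{1/k}
\;=\; \max_{1\le k\le n}\ \max_{\card{I}=k}\ \max_{\sigma\in S_I} \mu_A(\sigma)\;,
\]
and the right-hand side equals $\rho_{\max}(A)$ by the Remark following the definition of the maximal cycle mean.

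No step presents a genuine difficulty. The only place that calls for care is the first one: correctly pinning down the largest root of $q_A$ as $\max_k (c_{n-k})^{1/k}$, together with the degenerate subcase in which some — or all — of the coefficients equal the tropical zero $\zero = 0$. Everything afterwards is the routine translation between principal permanental minors and (cyclic versus arbitrary) permutations, which has already been packaged into the Remark.
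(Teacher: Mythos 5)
The paper offers no proof of this proposition: it is imported from Cuninghame-Green \cite{cuninghame83}, so there is no internal argument to compare against, and your proof genuinely supplies the missing details. Judged on its own it is correct and is essentially the standard argument. Both steps check out: the largest root of the monic max-times polynomial $q_A$ is $\theta=\max_{1\leq k\leq n}(c_{n-k})^{1/k}$, since $q_A$ coincides with the smooth monomial $x^n$ on $(\theta,\infty)$, while at $\theta>0$ the maximum is attained simultaneously by $x^n$ and by some $c_{n-k}x^{n-k}$ --- note that this last observation already makes $\theta$ a root by the paper's ``maximum attained for two values of $k$'' characterization, so your left/right derivative computation, though correct, is not strictly needed; and the substitution $(c_{n-k})^{1/k}=\max_{\card{I}=k}\max_{\sigma\in S_I}\mu_A(\sigma)$, combined with the paper's remark that cycles and arbitrary permutations give the same maximal mean weight, yields $\rho_{\max}(A)$. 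Two small blemishes, neither fatal: the asserted equality $\{x\geq 0\mid q_A(x)=x^n\}=[\theta,\infty)$ can fail at the single point $x=0$ when $c_0=\zero$ but $\theta>0$ (the equivalence $x^n\geq c_{n-k}x^{n-k}\iff x\geq (c_{n-k})^{1/k}$ breaks down at $x=0$); all you actually use is that $q_A$ equals $x^n$, hence is differentiable, on $(\theta,\infty)$, so the conclusion stands. And in the degenerate case $\theta=\zero$, calling $0$ a root of multiplicity $n$ rests on the Cuninghame-Green--Meijer factorization convention recalled in the paper rather than on literal nondifferentiability; there both sides of the desired identity are $\zero$, so again nothing is lost.
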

We shall occasionally refer to  $\rho_{\maxtimes} (A)$
as the \emph{tropical spectral radius} of $A$.
\begin{rmk*}
The term {\em algebraic eigenvalue}
is taken from~\cite{abg05}, to which we refer
for more background. It is used there
to distinguish them from the \emph{geometric} tropical eigenvalues,
which are 
the scalars $\lambda \in \maxtimes$
such that there exists a non-zero vector $u \in \maxtimes^n$ (eigenvector)
such that
 $A \odot u = \lambda \odot u$. It is known that every 
geometric eigenvalue is an algebraic eigenvalue,
but not vice versa. Also, the maximal geometric eigenvalue
coincides with the maximal cycle mean $\rho_{\max} (A)$,
hence with the maximal algebraic eigenvalue, $\rho_{\maxtimes}(A)$.
\end{rmk*}

\section{Main result}\label{sec-main}

We are now ready to formulate our main result.

\begin{thm}
\label{upper_bound}
Let $A \in \C^{n \times n}$ be a complex matrix, and let $\lambda_1,\dots,\lambda_n$ be its eigenvalues,
ordered by nonincreasing absolute value (i.e., $|\lambda_1| \geq \dots \geq |\lambda_n|$).
Moreover, let $\gamma_1 \geq \dots \geq \gamma_n$ be the tropical eigenvalues of $A$.
Then for all $k \in [n]$, we have
\[
 |\lambda_1 \dotsm \lambda_k| \leq U_k \gamma_1 \dotsm \gamma_k
\]
where
\[
 U_k = \rho(\ext^k_{\per} (\pat A)) \;  .
\]
\end{thm}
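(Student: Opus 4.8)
The plan is to dominate $|\lambda_1\dotsm\lambda_k|$ by the spectral radius of a nonnegative compound matrix, and then to factor the resulting quantity into a purely combinatorial part (which will be $U_k$) and a tropical part (which will be $\gamma_1\dotsm\gamma_k$). The combinatorial part will come from replacing determinants by permanents, and the tropical part from a diagonal‑scaling argument in the spirit of Friedland together with the tropical analogue of the ``compound‑eigenvalue'' identity.

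First I would invoke the classical fact that the eigenvalues of the $k$-th compound $\ext^k A$ are exactly the products $\prod_{i\in I}\lambda_i$ over $k$-subsets $I\subseteq[n]$; since the $\lambda_i$ are ordered by nonincreasing modulus this gives $\rho(\ext^k A)=|\lambda_1\dotsm\lambda_k|$. Combining $\rho(B)\le\rho(|B|)$ (valid for every complex matrix $B$), the entrywise estimate $|(\ext^k A)_{I,J}|=|\det A[I,J]|\le\per|A|[I,J]=(\ext^k_{\per}|A|)_{I,J}$, and monotonicity of the Perron root on nonnegative matrices, one obtains $|\lambda_1\dotsm\lambda_k|\le\rho(\ext^k_{\per}|A|)$. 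Next, for all $k$-subsets $I,J$,
\[
\per|A|[I,J]=\sum_{\sigma\colon I\to J}w_{|A|}(\sigma)\le\bigl(\#\{\sigma:w_{|A|}(\sigma)\ne0\}\bigr)\cdot\max_\sigma w_{|A|}(\sigma)=\per(\pat A)[I,J]\cdot\per_{\maxtimes}|A|[I,J],
\]
so that, entrywise, $\ext^k_{\per}|A|\le(\ext^k_{\per}\pat A)\circ(\ext^k_{\maxtimes}|A|)$, a Hadamard product of two $\binom nk\times\binom nk$ matrices having the same zero pattern.

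The third ingredient is a Friedland‑type scaling lemma: if $B$ and $C$ are nonnegative matrices with the same zero pattern then $\rho(B\circ C)\le\rho(B)\,\rho_{\max}(C)$. Indeed, for every $\varepsilon>0$ the system of difference constraints $x_J-x_I\le\log(\rho_{\max}(C)+\varepsilon)-\log C_{I,J}$, taken over pairs in the common support, has no negative cycle (every cycle of the weighted digraph of $C$ has mean weight $\le\rho_{\max}(C)$) and is therefore feasible; with $\Delta=\Diag(e^{x_I})$ this gives $\Delta^{-1}C\Delta\le(\rho_{\max}(C)+\varepsilon)\pat C$, hence $\Delta^{-1}(B\circ C)\Delta=B\circ(\Delta^{-1}C\Delta)\le(\rho_{\max}(C)+\varepsilon)(B\circ\pat C)=(\rho_{\max}(C)+\varepsilon)B$, and letting $\varepsilon\to0$ yields the claim; for $k=1$ this is precisely Friedland's inequality~\eqref{e-shmuel}. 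Applying it with $B=\ext^k_{\per}\pat A$ (whose Perron root is $U_k$) and $C=\ext^k_{\maxtimes}|A|$ gives $\rho(\ext^k_{\per}|A|)\le U_k\cdot\rho_{\max}(\ext^k_{\maxtimes}|A|)$.

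It then remains to prove $\rho_{\max}(\ext^k_{\maxtimes}|A|)\le\gamma_1\dotsm\gamma_k$, and this is where I expect the main obstacle. By Corollary~\ref{tropical_vieta} one has $\gamma_1\dotsm\gamma_k=\widehat c_{n-k}\ge c_{n-k}=\tr^k_{\maxtimes}|A|$, which already bounds the diagonal entries $\per_{\maxtimes}|A|[I,I]$ of $\ext^k_{\maxtimes}|A|$; but to bound the mean weight of an \emph{arbitrary} cycle of the associated digraph, the naive estimate ``each of the $k$ parallel tracks has mean weight $\le\gamma_1$'' produces only the too‑weak bound $\gamma_1^k$, so the concavity of the Newton polygon of $q_{|A|}$ must be exploited in an essential way. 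One way to get the sharp inequality (in fact the equality $\rho_{\max}(\ext^k_{\maxtimes}|A|)=\gamma_1\dotsm\gamma_k$, the tropical counterpart of the first step above) is to follow the strategy of~\cite{abg04,abg04b}: lift $|A|$ to a generic Puiseux‑series matrix $\mathcal A$ whose associated tropical matrix is $\Log|A|$, so that the valuations of the eigenvalues of $\mathcal A$ are the $\Log\gamma_i$; then $\ext^k\mathcal A$ has an eigenvalue of valuation $\Log\gamma_1+\dots+\Log\gamma_k$, while by genericity $v(\ext^k\mathcal A)$ coincides entrywise with $\Log(\ext^k_{\maxtimes}|A|)$, whence the claim follows after exponentiating; alternatively one can argue directly via a tropical Cauchy–Binet estimate for $\ext^k_{\maxtimes}$ and the Cuninghame–Green factorization $q_{|A|}(x)=(x\oplus\gamma_1)\dotsm(x\oplus\gamma_n)$. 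Chaining the four displayed estimates then proves the theorem; one may also note that $U_k=\rho(\ext^k_{\per}\pat A)\le\rho(\ext^k_{\per}J)=n!/(n-k)!$, where $J$ is the all‑ones $n\times n$ matrix and $\ext^k_{\per}J$ has all row sums equal to $\binom nk k!$.
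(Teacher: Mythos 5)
Your reduction follows the paper's own skeleton: $|\lambda_1\dotsm\lambda_k|=\rho(\ext^k A)$, the entrywise bound $|\ext^k A|\leq(\ext^k_{\per}\pat A)\circ(\ext^k_{\maxtimes}|A|)$, and a Friedland-type Hadamard-product inequality. Your diagonal-scaling proof of $\rho(B\circ C)\leq\rho(B)\,\rho_{\max}(C)$ is correct and is a legitimate, arguably more elementary, alternative to the paper's route via Kingman's log-convexity (Theorem~\ref{elsner_thm}) and Friedland's limit-eigenvalue identity (Theorem~\ref{friedland_thm}); the ``same zero pattern'' hypothesis is not even needed, since $B\circ\pat C\leq B$ entrywise suffices.

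The gap is the step you yourself flag as the main obstacle: the inequality $\rho_{\maxtimes}(\ext^k_{\maxtimes}|A|)\leq\gamma_1\dotsm\gamma_k$ (Theorem~\ref{rho_tropical_exterior_power} in the paper) is never actually proved, and your proposed Puiseux-series lift gives the inequality in the wrong direction. For \emph{any} matrix $\mathcal{B}$ over a non-archimedean valued field, the Newton-polygon argument shows $\max_i v(\lambda_i(\mathcal{B}))\leq\rho_{\max}(v(\mathcal{B}))$, because cancellations can only lower the valuations of the coefficients of the characteristic polynomial; equality needs a no-cancellation (genericity) hypothesis on $\mathcal{B}$. Applied to $\mathcal{B}=\ext^k\mathcal{A}$ with $\mathcal{A}$ a generic lift of $|A|$, this yields $\Log(\gamma_1\dotsm\gamma_k)\leq\rho_{\max}\bigl(v(\ext^k\mathcal{A})\bigr)=\Log\rho_{\maxtimes}(\ext^k_{\maxtimes}|A|)$, i.e.\ a \emph{lower} bound on $\rho_{\maxtimes}(\ext^k_{\maxtimes}|A|)$, not the upper bound the theorem requires. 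To reverse it you would need the genericity conclusion for $\ext^k\mathcal{A}$ itself, but its entries are the $k\times k$ minors of $\mathcal{A}$ and hence algebraically dependent, so the genericity of $\mathcal{A}$ (which in~\cite{abg04,abg04b} concerns independent perturbations of the entries) cannot simply be transferred; proving the required absence of cancellation for the compound is essentially equivalent to the combinatorial inequality you are trying to establish. The alternative ``tropical Cauchy--Binet plus Cuninghame--Green factorization'' is only a gesture and does not by itself control the mean weight of an arbitrary cycle of $\ext^k_{\maxtimes}|A|$. The paper closes exactly this gap by a combinatorial argument: each cycle product in $\ext^k_{\maxtimes}|A|$ is written as a monomial whose exponent matrix is an integer circulation matrix of weight $\ell$, which is decomposed into at most $\ell$ partial permutation matrices (Lemma~\ref{lem-circ}); each factor is bounded by a tropical trace, and one concludes with the log-concavity of the concavified coefficients of $q_{|A|}$ and Corollary~\ref{tropical_vieta}. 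Without an argument of this kind, your proof of the theorem is incomplete.
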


To prove this theorem, we shall need some auxiliary results.

\subsection{Friedland's Theorem}
Let $A = \left( a_{i,j} \right)$ and $B = \left( b_{i,j} \right)$ be nonnegative matrices.
We denote by $A \circ B$ the Hadamard (entrywise) product of $A$ and $B$,
and by $A^{[r]}$ the entrywise $r$-th power of $A$. That is:
\[
 {(A\circ B)}_{i,j} = a_{i,j} b_{i,j}, \qquad {(A^{[r]})}_{i,j} = a_{i,j}^r \;  .
\]

\begin{thm}[Friedland, \cite{friedland}]
 \label{friedland_thm}
 Let $A$ be a nonnegative matrix. Define the \emph{limit eigenvalue} of $A$ as
 \[
  \rho_{\infty}(A) = \lim_{r \to +\infty} \rho(A^{[r]})^{1/r}.
 \]
 Then we have
 \begin{equation}
 \rho_{\infty}(A) = \rho_{\max}(A),
 \end{equation}
 and also
 \[
 \rho (A) \leq \rho(\pat A)\rho_{\max}(A),
 \]
 where $\pat A$ denotes the pattern matrix of $A$, defined as
 \[
  {(\pat A)}_{i,j} = 
   \begin{cases}
    0 &\mbox{if } a_{i,j} = 0\\
    1 &\mbox{otherwise}
   \end{cases}
 \]
\end{thm}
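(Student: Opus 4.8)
The plan is to prove the two assertions of Friedland's theorem separately, starting with the identity $\rho_\infty(A) = \rho_{\max}(A)$ and then using it to derive the inequality $\rho(A) \le \rho(\pat A)\,\rho_{\max}(A)$.

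\textbf{Step 1: the limit eigenvalue equals the maximal cycle mean.} First I would observe that $\rho(M)$ for a nonnegative matrix $M$ is governed, via the trace formula $\tr(M^m) = \sum_{i} (M^m)_{i,i}$, by the closed walks in the associated weighted digraph. Concretely, for $M = A^{[r]}$ one has $(M^m)_{i,i} = \sum a_{i_1,i_2}^r a_{i_2,i_3}^r \cdots a_{i_m, i_1}^r$, summed over closed walks of length $m$ through $i$. Each such term is $w_A(\text{walk})^r$ where $w_A$ is the product of the edge weights; the number of terms is bounded by $n^{m}$, a constant independent of $r$. Hence $(\tr M^m)^{1/r}$ is squeezed between the largest $w_A(\text{walk})$ over length-$m$ closed walks and $n^{m/r}$ times that same quantity, so $\lim_{r\to\infty}(\tr A^{[r]m})^{1/r}$ equals the maximum of $w_A$ over closed walks of length $m$. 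Using $\rho(M) = \lim_{m\to\infty}(\tr M^m)^{1/m}$ (Gelfand's formula, valid for nonnegative matrices since $\rho(M)\le \|M^m\|^{1/m}\le (n\,\tr M^m)^{1/m}$ up to the usual Perron–Frobenius argument), and taking $m$ to be the cycle length achieving $\rho_{\max}(A)$, one gets $\rho_\infty(A) \ge \rho_{\max}(A)$; the reverse inequality follows because the maximal closed walk weight of length $m$, raised to power $1/m$, never exceeds $\rho_{\max}(A)$ (a closed walk decomposes into cycles, and a weighted average of cycle means is at most the maximal cycle mean). Interchanging the two limits $r\to\infty$ and $m\to\infty$ is the one point needing care; I would handle it by the two-sided estimate above, which makes both limits explicit and monotone enough to commute.

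\textbf{Step 2: the upper bound $\rho(A)\le\rho(\pat A)\,\rho_{\max}(A)$.} The key inequality here is monotonicity of the Perron root together with a comparison of entrywise powers. Write $P = \pat A$ and $\mu = \rho_{\max}(A)$. For each edge $(i,j)$ with $a_{i,j}\neq 0$, the cycle-mean bound gives no pointwise control on $a_{i,j}$ individually, so instead I would pass through the logarithmic/Hadamard-power trick: show that for every $r$, $A^{[r]} \le \mu^{?}\, \big(\text{something}\big)$ is \emph{not} quite available entrywise, so the cleaner route is to use the result of Step 1 directly. Namely, by submultiplicativity of $\rho$ under Hadamard products restricted to $0/1$ matrices — more precisely, the inequality $\rho(B\circ C)\le \rho(\pat B)\,\rho_\infty$-type bounds — one reduces to showing $\rho(A) \le \rho(P)\,\rho_\infty(A)$. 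I would prove this last inequality by noting $A \le \rho_\infty(A)\cdot(\text{diagonal rescaling of }P)$ fails in general, and instead invoke the standard argument: for any $\epsilon>0$ there is a diagonal matrix $D$ with $D^{-1}AD$ having all entries bounded by $(\rho_{\max}(A)+\epsilon)$ times the corresponding entry of $P$ — this is the max-plus/Perron–Frobenius scaling which exists because $\rho_{\max}(A)$ is the max-plus eigenvalue and admits a max-plus eigenvector giving the diagonal scaling. Then $\rho(A) = \rho(D^{-1}AD)\le (\rho_{\max}(A)+\epsilon)\,\rho(P)$ by entrywise monotonicity of $\rho$, and letting $\epsilon\to 0$ finishes it.

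\textbf{Main obstacle.} The delicate part is the diagonal scaling in Step 2: one needs that $\rho_{\max}(A)$ is attained as the max-plus eigenvalue and that a (possibly only approximate, if the digraph is not strongly connected) max-plus eigenvector $u$ exists so that $a_{i,j} u_j \le (\rho_{\max}(A)+\epsilon) u_i$ for all $i,j$ with $a_{i,j}\neq0$. For irreducible $A$ this is exact with $\epsilon=0$; for reducible $A$ one argues block by block along the condensation digraph, or perturbs $A$ to make it irreducible and passes to the limit, using continuity of $\rho$ and of $\rho_{\max}$. I would present the irreducible case in detail and indicate the reduction of the general case to it, since that is where Friedland's inequality really has content — the identity in Step 1 is comparatively mechanical once Gelfand's formula and the walk-counting estimate are in place.
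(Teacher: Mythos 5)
The paper does not actually prove this theorem: it is quoted from Friedland's paper \cite{friedland}, and the only related machinery developed in the text is Theorem~\ref{elsner_thm} together with Corollary~\ref{cor-gen-friedland}, from which the inequality would follow as $\rho(A)=\rho(\pat A\circ A)\leq\rho(\pat A)\rho_{\max}(A)$, i.e.\ via the Kingman/Elsner--Johnson--Dias da Silva log-convexity of the spectral radius combined with the limit identity $\rho_\infty=\rho_{\max}$. Your Step~2 takes a genuinely different and more elementary route: a max-times (sub)eigenvector $u$ with $a_{i,j}u_j\leq(\rho_{\max}(A)+\epsilon)u_i$, the similarity $D^{-1}AD$ with $D=\Diag(u)$, and entrywise monotonicity of $\rho$. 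This is a correct and classical argument, and it even yields Step~1 almost for free: scaling $A^{[r]}$ by $D^{[r]}$ gives $\rho(A^{[r]})\leq n(\rho_{\max}(A)+\epsilon)^r$, while $\rho(A^{[r]})\geq\rho_{\max}(A)^r$ follows at once from $\rho(M^\ell)\geq(M^\ell)_{i,i}$ applied to a maximizing cycle; so the delicate double-limit discussion in your Step~1 can be bypassed entirely.

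Two points in your write-up need repair. First, the trace form of Gelfand's formula is only a $\limsup$ for nonnegative matrices: $\rho(M)=\limsup_m(\tr M^m)^{1/m}$, not a limit (a cyclic permutation matrix has $\tr M^m=0$ off multiples of $n$), and your justifying inequality $\|M^m\|\leq n\,\tr M^m$ is false (take $M$ nilpotent with a nonzero entry). The $\limsup$ version, which is standard, still supports your squeeze via the decomposition of closed walks into cycles, but as written the step is wrong. Second, in the reducible case the fallback ``perturb $A$ to make it irreducible and pass to the limit'' does not work: perturbation changes $\pat A$ into the all-ones pattern and $\rho(\pat\,\cdot\,)$ is not continuous, so this only yields $\rho(A)\leq n\,\rho_{\max}(A)$. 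Your other two options do work and one of them should be carried out: either the $\epsilon$-sub-eigenvector, which exists for every nonnegative $A$ and every $\lambda>\rho_{\max}(A)$ (e.g.\ by a max-plus Kleene-star construction, since all cycle means of $A/\lambda$ are below one), or the reduction to the irreducible diagonal blocks of a block-triangular form, using that $\rho$ and $\rho_{\max}$ are both attained on these blocks and that $\rho(\pat A[I,I])\leq\rho(\pat A)$ by Proposition~\ref{spectral_radius_inequalities}.
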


Friedland's result is related to the following log-convexity
property of the spectral radius.

\begin{thm}[Kingman~\cite{kingman}, Elsner, Johnson and Da Silva, \cite{elsner}]
\label{elsner_thm}
If $A$ and $B$ are nonnegative matrices,
and $\alpha, \beta$ are two positive real numbers such that $\alpha + \beta = 1$,
then $\rho(A^{[\alpha]} \circ B^{[\beta]}) \leq \rho(A)^\alpha \rho(B)^\beta$.
\end{thm}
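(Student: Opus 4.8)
The statement to be proved is Theorem~\ref{elsner_thm}, the log-convexity of the spectral radius under weighted Hadamard products. I would prove this by combining three ingredients: the Perron--Frobenius theory (to get a variational handle on $\rho$), the classical Hölder inequality (to exploit the exponents $\alpha,\beta$ with $\alpha+\beta=1$), and a continuity/limiting argument to pass from the strictly positive, irreducible case to the general nonnegative case.

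\textbf{Step 1: reduce to the irreducible (or strictly positive) case.} First I would replace $A$ and $B$ by $A+\epsilon J$ and $B+\epsilon J$, where $J$ is the all-ones matrix and $\epsilon>0$. These are strictly positive, hence irreducible, and $\rho$ depends continuously on the entries, while $(A+\epsilon J)^{[\alpha]}\circ(B+\epsilon J)^{[\beta]} \to A^{[\alpha]}\circ B^{[\beta]}$ entrywise as $\epsilon\to 0$. So it suffices to prove the inequality when $A,B>0$ entrywise, where Perron--Frobenius gives a unique positive eigenvalue $\rho$ with a positive (left and right) eigenvector.

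\textbf{Step 2: the variational bound.} For a positive matrix $C=(c_{i,j})$ with Perron right eigenvector $v>0$, one has the Collatz--Wielandt-type identity $\rho(C) = \min_{x>0} \max_i \frac{1}{x_i}\sum_j c_{i,j} x_j$, and the min is attained at $x=v$. Now let $u,v>0$ be the Perron right eigenvectors of $A$ and $B$ respectively, so $\sum_j a_{i,j}u_j = \rho(A)u_i$ and $\sum_j b_{i,j}v_j = \rho(B)v_i$ for all $i$. Take the trial vector $x$ with $x_j = u_j^{\alpha} v_j^{\beta}$. Then for each $i$,
\[
\sum_j (a_{i,j}^{\alpha} b_{i,j}^{\beta})\, x_j \;=\; \sum_j (a_{i,j}u_j)^{\alpha}(b_{i,j}v_j)^{\beta}
\;\leq\; \Bigl(\sum_j a_{i,j}u_j\Bigr)^{\alpha}\Bigl(\sum_j b_{i,j}v_j\Bigr)^{\beta}
\;=\; \rho(A)^{\alpha}\rho(B)^{\beta}\, u_i^{\alpha}v_i^{\beta},
\]
using Hölder's inequality with conjugate exponents $1/\alpha$ and $1/\beta$ in the middle step. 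Dividing by $x_i = u_i^{\alpha}v_i^{\beta}$ gives $\frac{1}{x_i}\sum_j(A^{[\alpha]}\circ B^{[\beta]})_{i,j}x_j \leq \rho(A)^{\alpha}\rho(B)^{\beta}$ for every $i$, and the Collatz--Wielandt upper bound $\rho(A^{[\alpha]}\circ B^{[\beta]}) \leq \max_i \frac{1}{x_i}\sum_j(\cdots)_{i,j}x_j$ then yields the claim in the positive case. Letting $\epsilon\to 0$ finishes the general case.

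\textbf{Main obstacle.} The computation itself is short; the delicate points are (i) making sure the Collatz--Wielandt min-max characterization is invoked in exactly the form I need (it is standard for positive matrices, and for irreducible nonnegative matrices, but one should be careful about which inequality -- the $\min\max$ upper bound for an arbitrary positive test vector -- is actually being used, since only that direction is needed), and (ii) the passage to the limit: one must check that $\rho$ is continuous at the possibly-reducible limit matrix $A^{[\alpha]}\circ B^{[\beta]}$, which follows from continuity of eigenvalues (roots of the characteristic polynomial) together with the fact that $\rho$ is a root of largest modulus, so no genuine difficulty arises -- but it deserves a sentence. An alternative, essentially equivalent, route avoids eigenvectors entirely: use $\rho(C) = \lim_{m\to\infty}(\tr C^m)^{1/m}$ and apply Hölder repeatedly along the sum expressing $\tr\big((A^{[\alpha]}\circ B^{[\beta]})^m\big)$ over closed walks; this has the advantage of not needing irreducibility at all, at the cost of a slightly messier multi-index Hölder estimate. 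I would present the eigenvector proof as the main argument.
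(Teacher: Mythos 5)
Your proof is correct, but note that the paper does not prove this statement at all: Theorem~\ref{elsner_thm} is quoted from Kingman and from Elsner, Johnson and Dias da Silva precisely so that it can be used as a black box in Corollary~\ref{cor-gen-friedland}, so there is no internal argument to compare against. What you wrote is essentially the classical proof from those references: H\"older's inequality with conjugate exponents $1/\alpha$, $1/\beta$ applied to the rows, the trial vector $x_j=u_j^{\alpha}v_j^{\beta}$ built from the Perron eigenvectors, and the Collatz--Wielandt bound $\rho(C)\leq\max_i (Cx)_i/x_i$ valid for any nonnegative $C$ and any $x>0$. Your H\"older step is right ($s_j=(a_{i,j}u_j)^{\alpha}$, $t_j=(b_{i,j}v_j)^{\beta}$ gives exactly the middle inequality), and the reduction to positive matrices via $A+\epsilon J$, $B+\epsilon J$ is sound. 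One small streamlining: you do not need continuity of $\rho$ at the possibly reducible limit matrix $A^{[\alpha]}\circ B^{[\beta]}$. Since $(A+\epsilon J)^{[\alpha]}\circ(B+\epsilon J)^{[\beta]}\geq A^{[\alpha]}\circ B^{[\beta]}$ entrywise, monotonicity of the spectral radius on nonnegative matrices (Proposition~\ref{spectral_radius_inequalities}(b) in the paper) gives
\[
\rho\bigl(A^{[\alpha]}\circ B^{[\beta]}\bigr)\;\leq\;\rho\bigl((A+\epsilon J)^{[\alpha]}\circ(B+\epsilon J)^{[\beta]}\bigr)\;\leq\;\rho(A+\epsilon J)^{\alpha}\,\rho(B+\epsilon J)^{\beta},
\]
and then you only need continuity of $\rho$ along $A+\epsilon J\to A$ and $B+\epsilon J\to B$, which is uncontroversial. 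With that remark, your eigenvector argument is complete; the trace/closed-walk alternative you sketch would also work but is not needed.
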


\begin{cor}\label{cor-gen-friedland}
If $A$ and $B$ are nonnegative matrices, then $\rho(A \circ B) \leq \rho(A) \rho_{\max}(B)$.
\end{cor}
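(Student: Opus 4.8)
The goal is to deduce Corollary~\ref{cor-gen-friedland} from Theorem~\ref{elsner_thm} together with the identity $\rho_\infty(B) = \rho_{\max}(B)$ from Friedland's Theorem~\ref{friedland_thm}. The plan is to run a limiting argument on the log-convexity inequality, letting one of the two exponents tend to a degenerate value while rescaling the corresponding matrix.

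\begin{proof}[Proof sketch]
First I would fix a parameter $r > 1$ and apply Theorem~\ref{elsner_thm} with the splitting $\alpha = 1 - 1/r$, $\beta = 1/r$, and with the two matrices chosen as $A^{[1/\alpha]} = A^{[r/(r-1)]}$ and $B^{[1/\beta]} = B^{[r]}$. Since $(A^{[r/(r-1)]})^{[\alpha]} = A$ and $(B^{[r]})^{[\beta]} = B$, the entrywise product appearing on the left is exactly $A \circ B$, and the inequality reads
\[
\rho(A \circ B) \leq \rho\big(A^{[r/(r-1)]}\big)^{1-1/r}\,\rho\big(B^{[r]}\big)^{1/r}.
\]
The second key step is to pass to the limit $r \to +\infty$. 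On the right, $\rho(B^{[r]})^{1/r} \to \rho_\infty(B) = \rho_{\max}(B)$ by definition of the limit eigenvalue and the first part of Theorem~\ref{friedland_thm}, while $r/(r-1) \to 1$, so $A^{[r/(r-1)]} \to A$ entrywise and hence $\rho(A^{[r/(r-1)]})^{1-1/r} \to \rho(A)$ by continuity of the spectral radius on the (fixed-size) matrix entries. Taking the limit therefore yields $\rho(A \circ B) \leq \rho(A)\,\rho_{\max}(B)$.

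One should handle the degenerate possibility that some entry of $A$ or $B$ is zero, so that the exponent $r/(r-1)$ applied to a zero entry is harmless (it stays zero), and the patterns of $A^{[r/(r-1)]}$ and $B^{[r]}$ coincide with those of $A$ and $B$; this ensures the convergences above are genuinely entrywise convergences of matrices of fixed pattern. The only mild technical point, which I expect to be the main (and still minor) obstacle, is justifying the interchange of limit and spectral radius: it suffices to note that $A \mapsto \rho(A)$ is continuous on $\R_+^{n\times n}$ (it is the largest root of the characteristic polynomial, whose coefficients depend continuously on the entries), and that $\rho(B^{[r]})^{1/r}$ indeed converges — this is precisely the content of the limit in Theorem~\ref{friedland_thm}, so no separate argument is needed. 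Combining the two limits gives the claim.
\end{proof}
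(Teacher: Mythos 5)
Your proof is correct and essentially identical to the paper's: with $p=r/(r-1)$ and $q=r$ it is exactly the paper's application of Theorem~\ref{elsner_thm} to $A^{[p]}$, $B^{[q]}$ with $\alpha=1/p$, followed by the limit $q\to\infty$ using $\rho_\infty(B)=\rho_{\max}(B)$ from Theorem~\ref{friedland_thm}. Your extra remarks on zero entries and continuity of the spectral radius are fine but not needed beyond what the paper implicitly uses.
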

\begin{proof}
Let $p, q$ be two positive real numbers such that $\frac{1}{p} + \frac{1}{q} = 1$.
By applying Theorem~\ref{elsner_thm} to the nonnegative matrices $A^{[p]}$ and $B^{[q]}$, and $\alpha=\frac{1}{p}$,
we get $\rho(A \circ B) \leq \rho(A^{[p]})^\frac{1}{p} \rho(B^{[q]})^\frac{1}{q}$.
Then by taking the limit for $q \to \infty$ and using the identities of Theorem~\ref{friedland_thm}
we obtain $\rho(A \circ B) \leq \rho(A) \rho_{\max}(B)$.
\end{proof}

\subsection{Spectral radius of exterior powers}
The next two propositions are well known.
\begin{prop}[See e.g.\ {\cite[Theorem 8.1.18]{horn}}]
 \label{spectral_radius_inequalities}
 The following statements about the spectral radius hold:
 \begin{itemize}
  \item[(a)] For any complex matrix $A$ we have $\rho(A)\leq\rho(|A|)$;
  \item[(b)] If $A$ and $B$ are nonnegative matrices and $A\leq B$, then $\rho(A)\leq\rho(B)$.
 \end{itemize}
\end{prop}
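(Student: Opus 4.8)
The plan is to deduce both statements from Gelfand's formula $\rho(C)=\lim_{m\to\infty}\|C^m\|^{1/m}$, valid for any submultiplicative matrix norm. I would fix the norm $\|C\|:=\sum_{i,j}|c_{i,j}|$: it is submultiplicative, it is \emph{monotone}, in the sense that $0\leq C\leq D$ entrywise implies $\|C\|\leq\|D\|$, and it satisfies $\|C\|=\bigl\||C|\bigr\|$.

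For part (b), the point is that the entrywise order is stable under multiplication of nonnegative matrices: if $0\leq A\leq B$ then an immediate induction gives $0\leq A^m\leq B^m$ for every $m$, since each entry of $A^m$ is a sum of products of entries of $A$, dominated termwise by the corresponding expression for $B$. Hence $\|A^m\|\leq\|B^m\|$, and taking $m$-th roots and passing to the limit yields $\rho(A)\leq\rho(B)$. For part (a), I would use instead the triangle inequality in the form $|A^m|\leq|A|^m$ (entrywise), which follows from $|(A^m)_{i,j}|=\bigl|\sum a_{i,k_1}a_{k_1,k_2}\cdots a_{k_{m-1},j}\bigr|\leq\sum|a_{i,k_1}||a_{k_1,k_2}|\cdots|a_{k_{m-1},j}|=(|A|^m)_{i,j}$; then $\|A^m\|=\bigl\||A^m|\bigr\|\leq\bigl\||A|^m\bigr\|$, and again taking $m$-th roots and letting $m\to\infty$ gives $\rho(A)\leq\rho(|A|)$.

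There is no genuine obstacle here: the proposition is classical and is essentially proved this way in \cite{horn}. An alternative I would record is the Collatz--Wielandt route: for (a), if $Ax=\lambda x$ with $|\lambda|=\rho(A)$, then $|A|\,|x|\geq|\lambda|\,|x|$, so $\rho(|A|)\geq|\lambda|=\rho(A)$; for (b), choosing (by Perron--Frobenius) a nonnegative eigenvector $x$ of $A$ associated with $\rho(A)$, one has $Bx\geq Ax=\rho(A)x$, whence $\rho(B)\geq\rho(A)$. The Gelfand-formula argument is preferable since it handles both parts uniformly and does not require the existence of a Perron eigenvector.
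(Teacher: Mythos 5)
Your proof is correct; the paper gives no proof of its own here, simply citing Horn--Johnson (Theorem 8.1.18), and your Gelfand-formula argument with the monotone entrywise norm (via $|A^m|\leq |A|^m\leq B^m$) is essentially the standard proof found in that reference, so you match the intended approach.
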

\begin{prop}[See e.g.\ {\cite[2.15.12]{marcus}}]
 \label{rho_exterior_power}
 If $A \in \mathbb{C}^{n \times n}$ has eigenvalues $\lambda_1, \dots, \lambda_n$, then the eigenvalues of $\ext^k A$ are 
 the products $\prod_{i \in I} \lambda_i$ for all subsets $I \subset [n]$
of cardinality $k$.
\end{prop}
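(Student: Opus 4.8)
The plan is to reduce everything to the case of a triangular matrix, by exploiting the multiplicativity of the $k$-th compound together with Schur triangularization.

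\emph{Step 1: $\ext^k$ is a multiplicative functor.} First I would record that $\ext^k(AB) = (\ext^k A)(\ext^k B)$ for all $A, B \in \C^{n\times n}$ and that $\ext^k I_n = I_{\binom{n}{k}}$. This is precisely the Cauchy--Binet identity $\det\big((AB)[I,J]\big) = \sum_{|K|=k}\det A[I,K]\,\det B[K,J]$, or, more conceptually, the statement that $\ext^k A$ is the matrix of the induced endomorphism $\Lambda^k A$ of the exterior power $\Lambda^k\C^n$ in the standard basis $\{e_{i_1}\wedge\cdots\wedge e_{i_k} : i_1 < \cdots < i_k\}$, $\Lambda^k$ being a functor. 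In particular, if $P$ is invertible then $(\ext^k P)(\ext^k P^{-1}) = \ext^k I_n = I$, so $\ext^k P$ is invertible and $\ext^k(PAP^{-1}) = (\ext^k P)(\ext^k A)(\ext^k P)^{-1}$. Hence similar matrices have similar $k$-th compounds, and in particular the same eigenvalues (with multiplicities).

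\emph{Step 2: the triangular case.} Write $A = P T P^{-1}$ with $T$ upper triangular and $t_{ii} = \lambda_i$ (Schur decomposition, or Jordan form). By Step~1 it suffices to compute the spectrum of $\ext^k T$. Here I would prove the combinatorial lemma that, for an upper triangular $T$ and index sets $I = \{i_1 < \cdots < i_k\}$, $J = \{j_1 < \cdots < j_k\}$, one has $\det T[I,J] = 0$ as soon as $i_l > j_l$ for some $l$. Indeed, a nonzero term $\pm\prod_l t_{i_l, j_{\sigma(l)}}$ in the Leibniz expansion forces $i_l \le j_{\sigma(l)}$ for all $l$; fixing $l$, the $l$ indices $m$ with $\sigma(m)\le l$ cannot all lie in $\{1,\dots,l-1\}$, so some such $m$ has $m \ge l$, whence $i_l \le i_m \le j_{\sigma(m)} \le j_l$. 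Ordering the $k$-subsets of $[n]$ lexicographically, this lemma says exactly that $\ext^k T$ is upper triangular, with diagonal entries $(\ext^k T)_{I,I} = \det T[I,I] = \prod_{i\in I} t_{ii} = \prod_{i\in I}\lambda_i$ (the submatrix $T[I,I]$ being itself upper triangular). The eigenvalues of a triangular matrix are its diagonal entries, so $\ext^k T$ — and therefore $\ext^k A$ — has spectrum $\{\prod_{i\in I}\lambda_i : I\subset[n],\ |I|=k\}$, counted with multiplicities.

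The only genuinely non-routine ingredient is Step~1, i.e.\ making precise that the compound operation is multiplicative; but this is the classical Cauchy--Binet formula (equivalently, functoriality of $\Lambda^k$) and may simply be quoted. Everything else — the vanishing lemma for compounds of triangular matrices and the evaluation of $\det T[I,I]$ — is elementary bookkeeping. As an alternative that avoids Cauchy--Binet, one can argue by density: if $A$ is diagonalizable with eigenvectors $v_1,\dots,v_n$, then the wedges $v_{i_1}\wedge\cdots\wedge v_{i_k}$ form a basis of $\Lambda^k\C^n$ consisting of eigenvectors of $\Lambda^k A$ with eigenvalues $\prod_{i\in I}\lambda_i$; since both $\det(XI - \ext^k A)$ and $\prod_{|I|=k}\big(X - \prod_{i\in I}\lambda_i\big)$ have coefficients polynomial in the entries of $A$ (for the latter, via the expression of the elementary symmetric functions of the numbers $\prod_{i\in I}\lambda_i$ in terms of those of the $\lambda_i$), and the two polynomials agree on the dense set of diagonalizable matrices, they agree for every $A$.
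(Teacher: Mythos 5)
Your proof is correct. Note that the paper does not actually prove this proposition: it states it as well known and cites \cite[2.15.12]{marcus}, so there is no in-text argument to compare against. Your argument (multiplicativity of the compound via Cauchy--Binet, Schur triangularization, and the observation that the compound of an upper triangular matrix is upper triangular in lexicographic order with diagonal entries $\prod_{i\in I}\lambda_i$) is the standard textbook proof, and your combinatorial vanishing lemma for $\det T[I,J]$ is verified correctly; the density argument you sketch as an alternative is also sound.
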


An immediate corollary of Proposition \ref{rho_exterior_power} is that if $|\lambda_1| \geq \dots \geq |\lambda_n|$, 
then the spectral radius of $\ext^k A$ is 
\[
 \rho(\ext^k A) = |\lambda_1 \dotsm \lambda_k| \;  .
\]
In the tropical setting we can prove the following combinatorial result,
which will be one of the key ingredients of the proof of Theorem \ref{upper_bound}.

\begin{thm}
 \label{rho_tropical_exterior_power}
 Let $A \in \C^{n \times n}$ be a complex matrix, and let $\gamma_1 \geq \cdots \geq \gamma_n$ be its tropical eigenvalues.
 Then for any $k \in [n]$ we have 
 \[
  \rho_{\maxtimes}(\ext^k_{\maxtimes} |A|) \leq \gamma_1 \dotsm \gamma_k.
 \]
\end{thm}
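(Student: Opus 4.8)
The plan is to relate the tropical spectral radius of the $k$-th tropical exterior power of $|A|$ to the tropical eigenvalues of $|A|$ itself, using Proposition~\ref{rhoTequal} to reduce everything to a statement about cycle means of weighted permutations. By Proposition~\ref{rhoTequal}, $\rho_{\maxtimes}(\ext^k_{\maxtimes} |A|) = \rho_{\max}(\ext^k_{\maxtimes}|A|)$, which is the maximum mean weight of a cyclic permutation on the index set of $k$-subsets of $[n]$, where the weight of an edge from $I$ to $J$ is $\per_{\maxtimes}|A|[I,J] = \max_{\tau}\prod_{i\in I}|a_{i,\tau(i)}|$ over bijections $\tau\colon I\to J$. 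So I need to show: for any cyclic sequence $I_1 \to I_2 \to \dots \to I_\ell \to I_1$ of distinct $k$-subsets, the $\ell$-th root of $\prod_{m} \per_{\maxtimes}|A|[I_m, I_{m+1}]$ is at most $\gamma_1\dotsm\gamma_k$.

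The key step is a combinatorial decomposition: given such a cycle together with a choice of optimal bijection $\tau_m \colon I_m \to I_{m+1}$ realizing each permanent, the union of the graphs of the $\tau_m$ forms a collection of edges on $[n]$ in which (counting across all $m$) every vertex appearing in some $I_m$ has equal in-degree and out-degree. Hence this multiset of edges decomposes into cycles in $[n]$; one then wants to bundle these cycles into $\ell\cdot k$ ``slots'' and argue that the total weight $\prod_m w_{|A|}(\tau_m)$ is bounded by a product over these cycles which, after normalizing, is a product of $\ell k$ cycle means each of which is at most $\rho_{\max}(|A|) = \gamma_1$ — but that only gives $\gamma_1^k$, not $\gamma_1\dotsm\gamma_k$. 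To get the sharper bound I would instead appeal to Corollary~\ref{tropical_vieta}: $\gamma_1\dotsm\gamma_k = \widehat{c}_{n-k}$, the $(n-k)$-th log-concavified coefficient of $q_{|A|}$, and $c_{n-k} = \tr^k_{\maxtimes}|A| = \max_{|I|=k}\per_{\maxtimes}|A|[I,I]$. So it suffices to show that for every cycle on $k$-subsets, the mean weight is $\le \widehat{c}_{n-k}$; and since $\widehat{c}_{n-k} \ge c_{n-k} = \max_I \per_{\maxtimes}|A|[I,I]$, the crux becomes bounding a cyclic product of off-diagonal block permanents by diagonal ones via a rearrangement/averaging argument over the Newton polygon of $q_{|A|}$.

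Concretely, I expect the argument to run as follows. Take the cycle $I_1\to\cdots\to I_\ell\to I_1$ with optimal bijections $\tau_m$; composing them gives for each starting index a trajectory, and decomposing the union into cycles $c_1,\dots,c_r$ in $[n]$ of lengths $\ell_1,\dots,\ell_r$ with $\sum \ell_j = \ell k$, one has $\prod_m w_{|A|}(\tau_m) = \prod_j w_{|A|}(c_j)$. Each $c_j$ is a cyclic permutation of some subset $D_j\subset[n]$, so $w_{|A|}(c_j) = \mu_{|A|}(c_j)^{\ell_j} \le \gamma_1^{\ell_j}$. Grouping the cycles and using that any union of $\le k$ disjoint cycles on a total of at most ... vertices has weight bounded by a product of block permanents lying under the Newton polygon of $q_{|A|}$, one gets $\prod_j w_{|A|}(c_j) \le \big(\widehat{c}_{n-k}\big)^\ell$, whence the mean weight is $\le \widehat{c}_{n-k} = \gamma_1\dotsm\gamma_k$.

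The main obstacle is exactly this last grouping step: translating ``$\ell k$ edges forming cycles whose total vertex support is controlled'' into a bound by $(\gamma_1\cdots\gamma_k)^\ell$ rather than the weaker $\gamma_1^{k\ell}$. This requires exploiting concavity of the Newton polygon of $q_{|A|}$ — i.e., that $\widehat{c}_{n-k}$ is the max over ways of writing $k$ as a sum $k = k_1 + \cdots + k_s$ of a product $\prod_t c_{n-k_t}$ (up to concavification), equivalently that disjoint cycles covering $k$ indices in total have combined weight at most $\gamma_1\cdots\gamma_k$. I would isolate this as a lemma about the maximal weight of a family of pairwise-disjoint cycles whose supports have total size $k$, proved directly from the relation $c_{n-k} = \tr^k_{\maxtimes}|A|$ and the definition of the log-concavified coefficients, and then feed it into the cycle decomposition above.
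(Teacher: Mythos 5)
Your reduction is exactly the right one, and it matches the paper's strategy: use Proposition~\ref{rhoTequal} to write $\rho_{\maxtimes}(\ext^k_{\maxtimes}|A|)$ as a maximum over cycles $I_1\to\dots\to I_\ell\to I_1$ of $k$-subsets with optimal bijections $\tau_m$, record the exponents of the entries $|a_{i,j}|$ in a matrix $B=(b_{i,j})$ with equal row and column sums (a circulation of weight at most $\ell$), and aim at the bound $\prod_m w_{|A|}(\tau_m)\leq\big(\widehat{\tr}^k_{\maxtimes}|A|\big)^{\ell}=(\gamma_1\dotsm\gamma_k)^{\ell}$ via Corollary~\ref{tropical_vieta}. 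But the step you yourself flag as ``the main obstacle'' is the actual heart of the proof, and your sketch does not close it. Decomposing the edge multiset into cycles $c_1,\dots,c_r$ in $[n]$ and bounding each cycle separately is not enough: the number $r$ of cycles can exceed $\ell$ (think of many loops or short cycles at different vertices), and since $m\mapsto\log\widehat{\tr}^m_{\maxtimes}|A|$ is concave with value $0$ at $m=0$, the per-edge bound $\big(\widehat{\tr}^m_{\maxtimes}|A|\big)^{1/m}$ is \emph{nonincreasing} in $m$, so a product of many short-cycle bounds $\widehat{\tr}^{\ell_j}$ with $\sum_j\ell_j=\ell k$ can strictly exceed $\big(\widehat{\tr}^k\big)^{\ell}$. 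What is needed is to partition the $\ell k$ edges into \emph{at most $\ell$} families, each of which is a partial permutation (a union of vertex-disjoint cycles, hence bounded by $\tr^{k_m}_{\maxtimes}|A|$ with $\frac1\ell\sum_m k_m=k$), and only then apply log-concavity of the concavified coefficients. Your candidate lemma --- that disjoint cycles with total support exactly $k$ have weight at most $\gamma_1\dotsm\gamma_k$ --- is true but beside the point: nothing forces the families to have support of size exactly $k$ (they only average to $k$), and, more importantly, nothing in your argument produces the grouping into at most $\ell$ vertex-disjoint families at all; a greedy bundling of the cycles you obtained can fail, since cycles sharing vertices form a conflict structure that need not be $\ell$-colorable cycle-by-cycle.

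The paper resolves this with a separate combinatorial lemma (Lemma~\ref{lem-circ}): an integer circulation matrix of weight $\ell$ is a sum of at most $\ell$ partial permutation matrices. The proof pads $B$ with a diagonal matrix $D=\Diag(\ell-s_1,\dots,\ell-s_n)$ so that all line sums equal $\ell$, applies Hall's theorem (Birkhoff-type decomposition) to write $B+D$ as a sum of $\ell$ permutation matrices, and then removes the diagonal surplus to obtain the partial permutations $B^{(1)},\dots,B^{(r)}$, $r\leq\ell$, with supports $I^{(1)},\dots,I^{(r)}$. Note that this decomposition need not be compatible with the cycle decomposition you started from --- it may cut across your cycles --- which is precisely why it has to be proved by Hall's theorem rather than by regrouping. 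After that, the chain $\prod_m\tr^{\card{I^{(m)}}}_{\maxtimes}|A|\leq\prod_m\widehat{\tr}^{\card{I^{(m)}}}_{\maxtimes}|A|\leq\big(\widehat{\tr}^{k}_{\maxtimes}|A|\big)^{\ell}$ follows from concavity, and Corollary~\ref{tropical_vieta} finishes, as you anticipated. So your proposal has the correct skeleton and the correct target inequality, but the decisive decomposition step is missing, and the replacement lemma you propose would not supply it.
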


The proof of this theorem relies on the following result, which is
a variation on classical theorems of Hall and Birkhoff on doubly stochastic matrices. Recall that a {\em circulation matrix} of size $n\times n$ is a nonnegative matrix $B=(b_{i,j})$ such that for all $i\in [n]$, $\sum_{j\in[n]}b_{i,j}=\sum_{j\in [n]}b_{j,i}$. 
The {\em weight} of this matrix is the maximum value of the latter sums as $i\in [n]$. We call {\em partial permutation matrix} a matrix
having a permutation matrix as a principal submatrix, all the other entries
being zero.  The {\em support} of a partial permutation matrix consists of the row (or column) indices of this principal submatrix.
\begin{lem}\label{lem-circ}
Every circulation matrix $B=(b_{i,j})$ with integer entries, of weight $\ell$, can be written as the sum of at most $\ell$ partial permutation matrices.
\end{lem}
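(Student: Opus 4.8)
The plan is to prove Lemma~\ref{lem-circ} by induction on the weight $\ell$, extracting one partial permutation matrix at a time and reducing to a circulation matrix of weight at most $\ell-1$.

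\textbf{Setting up the extraction step.} Let $B=(b_{i,j})$ be an integer circulation matrix of weight $\ell\geq 1$. Consider the directed graph on vertex set $[n]$ with $b_{i,j}$ parallel arcs from $i$ to $j$ (so $B$ records arc multiplicities). The circulation property says that at every vertex the in-degree equals the out-degree. I want to find a set of arcs forming a (partial) permutation, i.e.\ a vertex-disjoint union of directed cycles, such that removing it leaves a valid circulation of smaller weight and, crucially, leaves no arc at any vertex that was previously ``saturated'' (had total throughput exactly $\ell$). Concretely, first discard all vertices $i$ with $\sum_j b_{i,j}=0$; on the support, repeatedly follow arcs: starting from any vertex with positive out-degree, walk along arcs (decrementing a bookkeeping copy) until a vertex repeats, yielding a directed cycle $C$. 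Since in-degree equals out-degree everywhere, after removing the arcs of $C$ the remaining graph is again balanced. Iterating, I decompose the arc set of $B$ into a disjoint union of directed cycles.

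\textbf{Assembling cycles into one partial permutation matrix.} From the cycle decomposition I must select a subfamily of pairwise vertex-disjoint cycles that together ``hit'' every vertex of maximum throughput $\ell$ (equivalently, every vertex of in/out-degree $\ell$), so that after removing these cycles the weight drops to at most $\ell-1$. This is the heart of the argument. I would argue as follows: among all vertices, those of degree $\ell$ must be covered; build an auxiliary bipartite-type structure or simply argue greedily. Actually the cleanest route is to invoke a Birkhoff–von Neumann / Hall-type argument directly on $B$ itself: because $B$ is an integer matrix with equal row and column sums equal to... no — the sums need not be equal across rows. Instead, embed $B$ into a genuine doubly stochastic situation: pad the row $i$ (resp.\ column $i$) with a diagonal slack $s_i = \ell - \sum_j b_{i,j}\geq 0$ placed on the $(i,i)$ entry of an enlarged bookkeeping. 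Wait — adding slack on the diagonal changes row and column sums symmetrically by $s_i$, making all row and column sums equal to $\ell$. Then $B + \Diag(s)$ is a nonnegative integer matrix with all line sums $\ell$, hence by the integer Birkhoff theorem it is a sum of exactly $\ell$ permutation matrices $P_1,\dots,P_\ell$. Now $B = \sum_{t=1}^\ell (P_t - R_t)$ where $R_t$ is the part of $P_t$ lying on the diagonal support of $\Diag(s)$; but $P_t$ restricted away from those forced diagonal slots is precisely a partial permutation matrix $Q_t$, and $B=\sum_{t=1}^\ell Q_t$. Some $Q_t$ may be zero, so $B$ is a sum of at most $\ell$ partial permutation matrices, as claimed.

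\textbf{The main obstacle.} The delicate point is the bookkeeping with the diagonal slack: one must check that $\Diag(s)$ really can be absorbed so that each permutation matrix $P_t$ contributes $s_i$ diagonal entries summed over $t$ at position $(i,i)$, and that subtracting exactly these diagonal entries from the $P_t$ leaves partial permutation matrices (principal-submatrix permutation matrices) rather than something more general — i.e.\ that the leftover $Q_t$ is supported on a set of rows equal to its set of columns. This requires noting that an arc $(i,j)$ of $B$ with $i\neq j$ never collides with a slack slot, and that $Q_t$, being $P_t$ minus some of its diagonal $1$'s, is a partial permutation matrix whose support is exactly $\{i : (P_t)_{i,i}=0\}$ on both rows and columns — which holds because a permutation matrix with some diagonal entries deleted still has, on the complementary index set, exactly one $1$ per row and per column only if those deleted entries were fixed points; they are, since they sit on the diagonal. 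I will spell this verification out, and also handle the trivial base case $\ell=0$ (empty sum) to close the induction, though in fact the Birkhoff-based argument is direct and needs no induction.
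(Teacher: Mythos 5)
Your final argument is exactly the paper's proof: pad $B$ with the diagonal slack $\Diag(\ell-s_1,\dots,\ell-s_n)$ so all line sums become $\ell$, decompose $B+D$ into $\ell$ permutation matrices by the integer Birkhoff/Hall theorem, and delete from each $P_t$ the diagonal ones attributed to the slack, leaving (possibly zero) partial permutation matrices. Just carry out the fix you yourself flag: at each position $(i,i)$ you must delete exactly $\ell-\sum_j b_{i,j}$ diagonal ones in total (i.e.\ choose $D^{(t)}\leq P_t$ with $\sum_t D^{(t)}=D$), not every diagonal one sitting on a slack slot, since that over-subtracts when $b_{i,i}>0$.
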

\begin{proof}
We set $s_i = \sum_{j\in[n]} b_{i,j} = \sum_{j\in [n]} b_{j,i}$,
so that $s_i \leq \ell \quad\forall i \in [n]$.
 If we add to $B$ the diagonal matrix $D=\Diag(\ell-s_1,\dots,\ell-s_n)$, we obtain a matrix with nonnegative integer entries
 in which the sum of each row and each column is $\ell$. 
A well known theorem (see e.g.\ Hall, {\cite[Theorem 5.1.9]{hall}}), allows us 
to write
 \[
 B+D = P^{(1)} + \dots + P^{(\ell)}
 \]
 where the $P^{(i)}$'s are permutation matrices.
 Furthermore we can write $D$ as a sum of diagonal matrices $D^{(1)}, \dots, D^{(\ell)}$ such that $D^{(i)} \leq P^{(i)} \;\forall i\in[\ell]$.
 In this way we have
 \[
 B = (P^{(1)}-D^{(1)}) + \dots + (P^{(\ell)}-D^{(\ell)}) = B^{(1)} + \dots + B^{(\ell)}
 \]
 where every $B^{(m)}=(b^{(m)}_{i,j})$ is a partial permutation matrix (possibly zero).
\end{proof}

\begin{proof}[Proof of Theorem~\ref{rho_tropical_exterior_power}]
Let $A \in \C^{n \times n}$ be a complex matrix.
By definition, the tropical eigenvalues $\gamma_1 \geq \cdots \geq \gamma_n$
of $A$ are the roots of the tropical characteristic polynomial 
$q_{|A|} = \per_{\maxtimes}(|A| \oplus XI  )$.
Recall that $\tr^k_{\maxtimes}|A|$ is the $(n-k)$-th coefficient of 
$q_{|A|}$, with the convention $\tr^0_\maxtimes |A| = 1$.
We shall denote by  $\widehat{\tr}^k_{\maxtimes}|A|$ the 
$(n-k)$-th log-concavified coefficient of $q_{|A|}$.

 In the following formulas we will denote by $S_{I,J}$ the set of bijections from $I$ to $J$.
 By Proposition~\ref{rhoTequal}, we have
 \begin{align}
 \rho_{\maxtimes}(\ext^k_{\maxtimes} |A|) &= 
 \max_{\ell\in [n]} \; \max_{\substack{\card{I_1} = k \vspace{-4pt} \\ \vspace{2pt} \vdots\\ \card{I_\ell} = k}} 
 {\left( {\ext^k_{\maxtimes} |A|}_{I_1I_2}\dotsm{\ext^k_{\maxtimes} |A|}_{I_lI_1} \right)}^{1/\ell}\nonumber\\
 &= \max_{\ell\in [n]} \; \max_{\substack{\card{I_1} = k \vspace{-4pt} \\ \vspace{2pt} \vdots\\ \card{I_\ell} = k}} \;
 \max_{\substack{\sigma_1\in S_{I_1,I_2} \vspace{-4pt} \\ \vspace{2pt} \vdots\\ \sigma_\ell\in S_{I_\ell,I_1}}}
 {\Bigl(\prod_{i_1\in I_1}|a_{i_1 \sigma_1(i_1)}|\dots \prod_{i_\ell\in I_\ell}|a_{i_\ell \sigma_\ell(i_\ell)}|\Bigr)}^{1/\ell}\enspace .\label{e-tocite}
 \end{align}
 The product in parentheses is a monomial in the entries of $|A|$ 
of degree $k\cdot \ell$. We rewrite it as
 \[
 \prod_{\substack{i \in [n]\\j \in [n]}}|a_{i,j}|^{b_{i,j}},
 \]
 where $b_{i,j}$ is the total number of times the element $|a_{i,j}|$ appears in the product. We can arrange the $b_{i,j}$ into a matrix
 $B=(b_{i,j})$, and observe that $\sum_{j\in [n]} b_{i,j} = \sum_{j\in [n]} b_{j,i}\; \forall i \in [n]$,
so that $B$ is a circulation matrix.
In fact, for every $m\in [\ell]$, every index $i\in I_m$ contributes for $1$ to the $i$-th row of $B$
 (because of the presence of $|a_{i,\sigma_m(i)}|$ in the product),
 and also for $1$ to the $i$-th column of $B$ (because of the presence of $|a_{\sigma_{m-1}^{-1}(i),i}|$ in the product). By Lemma~\ref{lem-circ}, we
can write $B = B^{(1)}+\cdots + B^{(r)}$ with $r\leq \ell$, where
$B^{(1)},\dots,B^{(r)}$ are partial permutation matrices, with
respective supports $I^{(1)},\dots,I^{(r)}$. We set $B^{(r+1)}=\dots = B^{(\ell)}=0$
and $I^{(r+1)}=\dots=I^{(\ell)}=\varnothing$.

The product in the definition of $\rho_{\maxtimes}(\bigwedge^k_{\maxtimes}|A|)$ 
(inside the parentheses in~\eqref{e-tocite}) can thus be rewritten as
 \[
 \begin{aligned}
 \prod_{\substack{i \in [n]\\j \in [n]}}|a_{i,j}|^{b_{i,j}}=
 &\prod_{m=1}^\ell \biggl(\prod_{\substack{i\in[n]\\j\in[n]}}{|a_{i,j}|}^{b^{(m)}_{i,j}} \biggr)\\
 \leq & \prod_{m=1}^\ell \tr^{\card{I^{(m)}}}_{\maxtimes}|A|\\
 \leq & \prod_{m=1}^\ell {\widehat{\tr}}^{\card{I^{(m)}}}_{\maxtimes}|A|\\
\leq & ({\widehat{\tr}}^k_{\maxtimes}|A|)^\ell,
 \end{aligned}
 \]
where the last inequality follows from the log-concavity of $k\mapsto {\widehat{\tr}}^k_{\maxtimes}|A|$ and from the fact that
 $\frac{1}{\ell}\sum_{m=1}^\ell\card{I^{(m)}} = k$.
So, using~\eqref{e-tocite}, we conclude that
$ \rho_{\maxtimes}(\ext^k_{\maxtimes}|A|)\leq {\widehat{\tr}}^k_{\maxtimes}|A|$.

 Now, ${\widehat{\tr}}^k_{\maxtimes}|A|$ is the $(n-k)$-th 
concavified coefficient of the tropical  polynomial $q_{|A|}$,
 whose roots are $\gamma_1\geq\dots\geq\gamma_n$.
 Applying Corollary~\ref{tropical_vieta}, and recalling that 
$\tr^{0}_{\maxtimes} |A| = 1$, we obtain
 \[
 {\widehat{\tr}}^k_{\maxtimes}|A| = \gamma_1\dotsm\gamma_k,
 \]
 so we conclude that
 \[
 \rho_{\maxtimes}(\ext^k_{\maxtimes}|A|) \leq \gamma_1\dotsm\gamma_k.
 \]
\end{proof}

\subsection{Proof of Theorem \ref{upper_bound}}
For all subsets $I,J$ of $[n]$, we have
\[
\begin{aligned}
{|\ext^k A|}_{I,J} &= |\det A[I,J]| \leq \per |A[I,J]| \\
&\leq \card{\{\sigma \in S_{I,J} | w_A(\sigma) \neq 0 \}} \cdot \max_{\sigma\in S_{I,J}} |w_A(\sigma)|\\
&={\left( \ext^k_{\per}  (\pat A)\right)}_{I,J}   {\left(\ext^k_{\maxtimes}  |A|\right)}_{I,J}.
\end{aligned}
\]
Since this holds for all $I$ and $J$, we can write, in terms of matrices,
\begin{align}
|\ext^k A| \leq \left(\ext^k_{\per} (\pat A)\right) \circ \left(\ext^k_{\maxtimes}  |A| \right) \;  .\label{e-circ}
\end{align}
We have
\begin{align*}
|\lambda_1\dots\lambda_k| &=
\rho(\ext^k  A) \; \qquad \text{(by Proposition~\ref{rho_exterior_power})}
\\&\leq \rho\bigl( (\ext^k_{\per} (\pat A))\circ(\ext^k_{\maxtimes}  |A|)\bigr) \;  \qquad \text{(by%
~\eqref{e-circ} and Proposition \ref{spectral_radius_inequalities})},\\
& \leq \rho(\ext^k_{\per} (\pat A)) \rho_{\maxtimes}(\ext^k_{\maxtimes}  |A|) \;
\qquad \qquad\text{(by Corollary~\ref{cor-gen-friedland}
and Proposition~\ref{rhoTequal})}\\
& \leq  \rho(\ext^k_{\per} (\pat A)) \gamma_1 \dotsm \gamma_k \;  \qquad 
\text{(by Theorem~\ref{rho_tropical_exterior_power})}
\end{align*}
and the proof of the theorem is complete.\qed

\section{Lower bound}\label{sec-lower}
We next show that the product of the $k$ largest absolute values of eigenvalues
can be bounded from below in terms of the $k$ largest tropical
eigenvalues, under some quite restrictive non-degeneracy conditions.
\begin{lem} \label{lower_bound_lemma}
 Let $A = (a_{i,j})\in\mathbb{C}^{n\times n}$ be a complex matrix, and let $\lambda_1,\dots,\lambda_n$ be its eigenvalues,
 ordered by nonincreasing absolute value (i.e.\ $|\lambda_1| \geq \dots \geq |\lambda_n|$).
 Moreover, let $\gamma_1 \geq \dots \geq \gamma_n$ be the tropical eigenvalues of $A$.
 Let $k\in[n]$
be a saturated index for the tropical characteristic polynomial $q_{|A|}$.
 Suppose $\tr^k A\neq 0$, and let $C_k$ be any positive constant such that
 \begin{equation} \label{hyp_lemma}
 C_k \tr^k_{\maxtimes} |A| \leq |\tr^k A| \; .
 \end{equation}
 Then the following bound holds:
 \[ \frac{C_k}{\binom{n}{k}} \gamma_1 \dotsm \gamma_k \leq |\lambda_1 \dotsm \lambda_k| \; . \]
\end{lem}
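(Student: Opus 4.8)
The plan is to mirror, in reverse, the chain of inequalities used to prove Theorem~\ref{upper_bound}, exploiting the fact that $k$ is a saturated index. First I would record the two identities that do the real work: by Proposition~\ref{rho_exterior_power} (and the remark following it) we have $|\lambda_1\dotsm\lambda_k| = \rho(\ext^k A)$, and by Corollary~\ref{tropical_vieta}, since $\tr^k_\maxtimes|A|$ is the $(n-k)$-th coefficient of $q_{|A|}$, the saturation hypothesis gives $\tr^k_\maxtimes|A| = \widehat{\tr}^k_\maxtimes|A| = \gamma_1\dotsm\gamma_k$ (using $\tr^0_\maxtimes|A|=1$). So the inequality to be proved is equivalent to $C_k\binom{n}{k}^{-1}\tr^k_\maxtimes|A| \leq \rho(\ext^k A)$.

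The key step is a lower bound on $\rho(\ext^k A)$ in terms of $\tr^k A$. The trace of any $m\times m$ complex matrix $M$ with eigenvalues $\mu_1,\dots,\mu_m$ satisfies $|\tr M| = |\mu_1+\dots+\mu_m| \leq m\max_i|\mu_i| = m\,\rho(M)$. Applying this to $M=\ext^k A$, whose size is $\binom{n}{k}$ and whose eigenvalues are the products $\prod_{i\in I}\lambda_i$ by Proposition~\ref{rho_exterior_power}, and noting that $\tr(\ext^k A) = \tr^k A$ by definition, we get
\[
 |\tr^k A| \leq \binom{n}{k}\,\rho(\ext^k A) = \binom{n}{k}\,|\lambda_1\dotsm\lambda_k| \enspace .
\]
Combining this with the hypothesis $C_k\tr^k_\maxtimes|A| \leq |\tr^k A|$ yields $C_k\tr^k_\maxtimes|A| \leq \binom{n}{k}|\lambda_1\dotsm\lambda_k|$, and then substituting $\tr^k_\maxtimes|A| = \gamma_1\dotsm\gamma_k$ from the previous paragraph gives exactly $\frac{C_k}{\binom{n}{k}}\gamma_1\dotsm\gamma_k \leq |\lambda_1\dotsm\lambda_k|$, as desired. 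The hypothesis $\tr^k A\neq 0$ is what guarantees the left-hand side is meaningfully positive (and that such a $C_k>0$ exists), but it is not otherwise needed in the computation.

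There is no serious obstacle here; the lemma is essentially a bookkeeping exercise once one has the elementary bound $|\tr M|\leq (\dim M)\rho(M)$ and the dictionary between $\ext^k A$, its trace $\tr^k A$, and its spectral radius $|\lambda_1\dotsm\lambda_k|$. The only point requiring a little care is the role of saturation: it is precisely the condition that makes the $(n-k)$-th concavified coefficient of $q_{|A|}$ equal to the actual coefficient $\tr^k_\maxtimes|A|$, so that Corollary~\ref{tropical_vieta} can be read as $\tr^k_\maxtimes|A| = \gamma_1\dotsm\gamma_k$ rather than merely $\widehat{\tr}^k_\maxtimes|A| = \gamma_1\dotsm\gamma_k$ with a possible gap. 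If $k$ were not saturated one would only get $\tr^k_\maxtimes|A| \leq \gamma_1\dotsm\gamma_k$, which points the wrong way and would break the argument — which explains why that hypothesis is imposed.
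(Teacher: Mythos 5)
Your proof is correct, but it takes a genuinely different route from the paper at the key step. The paper's proof keeps the argument inside the Hadamard--Ostrowski--P\'olya framework: it introduces the tropical roots $\alpha_1\geq\dots\geq\alpha_n$ of the ordinary characteristic polynomial $p_A$, uses Corollary~\ref{tropical_vieta} twice (once for $p_A^\times$, giving $\alpha_1\dotsm\alpha_k=\lcav(|\tr^k A|)$, and once for $q_{|A|}$ together with saturation, exactly as you do), then chains $\gamma_1\dotsm\gamma_k=\tr^k_{\maxtimes}|A|\leq \frac{1}{C_k}|\tr^k A|\leq\frac{1}{C_k}\lcav(|\tr^k A|)=\frac{1}{C_k}\alpha_1\dotsm\alpha_k$ and finally invokes Ostrowski's lower bound from~\eqref{e-ost} to get $\alpha_1\dotsm\alpha_k\leq\binom{n}{k}|\lambda_1\dotsm\lambda_k|$. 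You replace the detour through $p_A^\times$ and the citation of~\eqref{e-ost} by the elementary estimate $|\tr^k A|=|\tr(\ext^k A)|\leq\binom{n}{k}\rho(\ext^k A)=\binom{n}{k}|\lambda_1\dotsm\lambda_k|$, which is legitimate since $\tr^k A=\tr(\ext^k A)$ by definition and Proposition~\ref{rho_exterior_power} identifies the eigenvalues of $\ext^k A$; in effect you re-prove, in one line, exactly the instance of Ostrowski's lower bound that the paper imports. Your treatment of saturation (it is what upgrades $\widehat{\tr}^k_{\maxtimes}|A|=\gamma_1\dotsm\gamma_k$ to $\tr^k_{\maxtimes}|A|=\gamma_1\dotsm\gamma_k$, and without it the inequality points the wrong way) coincides with the paper's. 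What each approach buys: the paper's version displays the lemma as a direct transfer of the polynomial lower bound in~\eqref{e-ost}, consistent with the narrative of the article, while yours is more self-contained and elementary, needing only Vieta-type identities and the trivial bound $|\tr M|\leq(\dim M)\,\rho(M)$ on the compound matrix; both yield the same constant $\binom{n}{k}$.
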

\begin{proof}
Thanks to Ostrowski's lower bound in~\eqref{e-ost},
we already have
\[ \alpha_1 \dotsm \alpha_k \leq \binom{n}{k} |\lambda_1\dotsm\lambda_k|, \]
where $\alpha_1\geq\dots\geq\alpha_n$ are the tropical roots
of the ordinary characteristic polynomial
\[ p_A(x) = \det (xI - A) = x^n - (\tr A) x^{n-1} + (\tr^2 A) x^{n-2} + \dots + (-1)^n \tr^n A \; . \]
Moreover, by Corollary~\ref{tropical_vieta} we have
\[ 
\begin{aligned}
	\alpha_1 \dotsm \alpha_k &= \lcav(|\tr^k A|) \\
	\gamma_1 \dotsm \gamma_k &= \lcav(\tr^k_{\maxtimes} |A|) \; ,
\end{aligned}
\]
and since $k$ is a saturated index for $q_{|A|}$, $\lcav(\tr^k_{\maxtimes} |A|) = \tr^k_{\maxtimes} |A|$.
Now we can use Equation~\eqref{hyp_lemma} and write
\[
\gamma_1 \dotsm \gamma_k = \tr^k_{\maxtimes}|A| \leq \frac{1}{C_k} |\tr^k A|
\leq \frac{1}{C_k} \lcav (|\tr^k A|) = \frac{1}{C_k} \alpha_1 \dotsm \alpha_k
\leq \frac{\binom{n}{k}}{C_k} |\lambda_1 \dotsm \lambda_k| \; .
\]
\end{proof}

\begin{thm} \label{lower_bound}
 Let $A$, $\lambda_1,\dots,\lambda_n$, $\gamma_1,\dots,\gamma_n$ be as
in Lemma~\ref{lower_bound_lemma}, and let $k$ be a saturated index
 for the tropical characteristic polynomial $q_{|A|}$.
 Suppose that among the subsets of cardinality $k$ of 
$[n]$
there is a unique subset $\overline{I}_k$
 for which there exists a (possibly not unique) permutation $\overline{\sigma}\in S_{\overline{I}_k}$
 that realizes the maximum
 \begin{equation} \label{max_perm_weight}
  \max_{\substack{I\subset [n]\\\card{I}=k}} \max_{\sigma \in S_I} \prod_{i \in I} |a_{i,\sigma(i)}|
 \end{equation}
 (that is, $w_{|A|}(\overline{\sigma})=\tr^k_{\maxtimes}|A|$).
 Suppose $\det A[\overline{I}_k,\overline{I}_k] \neq 0$.
 Finally suppose that, for any permutation $\sigma$ of any subset of cardinality $k$ except $\overline{I}_k$,
 $w_{|A|}(\sigma)\leq\delta_k\cdot w_{|A|}(\bar{\sigma})=\delta_k\tr^k_{\maxtimes}|A|$, with
 \[
  \delta_k < \frac{|\det A[\overline{I}_k,\overline{I}_k]|}{\tr^k_{\maxtimes}|A|\left(\binom{n}{k}-1\right)k!}.
 \]
 Then the inequality
 \[
  L_k\gamma_1\dotsm\gamma_k\leq|\lambda_1\dotsm\lambda_k|
 \]
 holds with
 \[
  L_k = \frac{1}{\binom{n}{k}}\left(\frac{|\det A[\overline{I}_k,\overline{I}_k]|}
  {\tr^k_{\maxtimes}|A|} - \delta_k\mbox{$\left(\binom{n}{k}-1\right)k!$} \right).
 \]
\end{thm}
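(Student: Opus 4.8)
The plan is to estimate $|\tr^k A|$ from below by isolating the dominant term coming from $\overline{I}_k$ and bounding all the rest. First I would expand
\[
\tr^k A = \sum_{\substack{I\subset[n]\\ \card{I}=k}} \det A[I,I],
\]
and split off the single summand $\det A[\overline{I}_k,\overline{I}_k]$, so that by the triangle inequality
\[
|\tr^k A| \geq |\det A[\overline{I}_k,\overline{I}_k]| - \sum_{I\neq \overline{I}_k} |\det A[I,I]|.
\]
For each $I\neq \overline{I}_k$, I would bound $|\det A[I,I]|\leq \per|A[I,I]|\leq k!\cdot\max_{\sigma\in S_I} w_{|A|}(\sigma)$, and then invoke the hypothesis $w_{|A|}(\sigma)\leq \delta_k\tr^k_{\maxtimes}|A|$ for permutations of subsets other than $\overline{I}_k$. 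Since there are $\binom{n}{k}-1$ such subsets, this yields
\[
|\tr^k A| \geq |\det A[\overline{I}_k,\overline{I}_k]| - \left(\tbinom{n}{k}-1\right)k!\,\delta_k\,\tr^k_{\maxtimes}|A| \;>\;0,
\]
where positivity is exactly the stated constraint on $\delta_k$. In particular $\tr^k A\neq 0$.

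Next I would package this as an inequality of the form $C_k\tr^k_{\maxtimes}|A|\leq |\tr^k A|$ in order to apply Lemma~\ref{lower_bound_lemma}. Dividing the previous display by $\tr^k_{\maxtimes}|A|$ (which is positive, since $\overline{\sigma}$ realizes a nonzero maximum — note $w_{|A|}(\overline{\sigma})>0$ because $\det A[\overline{I}_k,\overline{I}_k]\neq 0$ forces a nonzero diagonal term), we get
\[
\frac{|\tr^k A|}{\tr^k_{\maxtimes}|A|} \;\geq\; \frac{|\det A[\overline{I}_k,\overline{I}_k]|}{\tr^k_{\maxtimes}|A|} - \left(\tbinom{n}{k}-1\right)k!\,\delta_k \;=:\; C_k,
\]
which is positive by assumption. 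Then Lemma~\ref{lower_bound_lemma}, whose hypotheses (that $k$ is a saturated index, $\tr^k A\neq 0$, and $C_k\tr^k_{\maxtimes}|A|\leq|\tr^k A|$) are now all verified, gives
\[
\frac{C_k}{\binom{n}{k}}\,\gamma_1\dotsm\gamma_k \leq |\lambda_1\dotsm\lambda_k|,
\]
and $C_k/\binom{n}{k}=L_k$ is precisely the claimed constant.

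The only subtle point — more a bookkeeping matter than a genuine obstacle — is making sure the terms are counted correctly: $\max_{\sigma\in S_I}w_{|A|}(\sigma)$ is the largest weight of a permutation supported exactly on $I$, and the hypothesis controls these for all $I\neq \overline{I}_k$ simultaneously by $\delta_k\tr^k_{\maxtimes}|A|$, with the factor $k!=\card{S_I}$ accounting for the passage from a single permutation weight to the permanent and hence to $|\det A[I,I]|$. One should also check that $\overline{I}_k$ being the \emph{unique} maximizing subset is not actually needed for this lower bound beyond guaranteeing that the ``except $\overline{I}_k$'' hypothesis is meaningful; uniqueness is implicitly enforced by the strict inequality defining $\delta_k$ together with $\det A[\overline{I}_k,\overline{I}_k]\neq 0$. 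With these observations the argument is a direct chain of estimates feeding into Lemma~\ref{lower_bound_lemma}.
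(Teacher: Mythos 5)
Your proof is correct and follows essentially the same route as the paper: the same splitting of $\tr^k A$ into the dominant term $\det A[\overline{I}_k,\overline{I}_k]$ plus the remaining subsets, the same chain $|\det A[I,I]|\leq\per|A[I,I]|\leq k!\,\delta_k\,\tr^k_{\maxtimes}|A|$ for $I\neq\overline{I}_k$, and the same conclusion via Lemma~\ref{lower_bound_lemma} with the constant $C_k$. Your observation that positivity of the resulting bound also gives $\tr^k A\neq 0$ is a point the paper leaves implicit, and it is correct.
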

\begin{proof}
To prove the theorem it is sufficient to show that \eqref{hyp_lemma} holds with 
\[ 
C_k = \left( \frac{|\det A[\overline{I}_k,\overline{I}_k]|}{\tr^k_{\maxtimes}|A|}
- \delta_k \left(\binom{n}{k}-1\right)k! \right) \; .
\]
We have
\begin{align*}
|\tr^k A| &= \Bigl|\sum_{\substack{I\in[n]\\\card{I}=k}}\det A[I,I]\Bigr|\\
&\geq \Bigl|\det A[\overline{I}_k,\overline{I}_k]\Bigr|-\Bigl|\sum_{\substack{\card{I}=k\\I\neq\overline{I}_k}}\det A[I,I]\Bigr|\\
&\geq \Bigl|\det A[\overline{I}_k,\overline{I}_k]\Bigr|-\sum_{\substack{\card{I}=k\\I\neq\overline{I}_k}}\per \bigl|A[I,I]\bigr|\\
&\geq \Bigl|\det A[\overline{I}_k,\overline{I}_k]\Bigr|-\left(\binom{n}{k}-1\right) k! \delta_k\tr^k_{\maxtimes}|A| \; ,\\
&\geq \left(\frac{|\det A[\overline{I}_k,\overline{I}_k]|}{\tr^k_{\maxtimes}|A|} - \delta_k\left(\binom{n}{k}-1\right)k! \right)
\tr^k_{\maxtimes}|A|\\
&=C_k\tr^k_{\maxtimes}|A|\; ,
\end{align*}
and the hypothesis on $\delta_k$ guarantees that $C_k > 0$.
\end{proof}

If the maximum in \eqref{max_perm_weight} is attained by exactly one permutation,
then the statement of Theorem~\ref{lower_bound} can be slightly modified as follows.
\begin{thm}
Let $A$, $\lambda_1,\dots,\lambda_n$, $\gamma_1,\dots,\gamma_n$ and $k$ be as 
in Theorem~\ref{lower_bound}.
 Suppose that the maximum in~\eqref{max_perm_weight} is attained for a unique
 permutation $\bar{\sigma}$, and that for any other permutation $\sigma$ of
 any $k$-subset of 
$[n]$
the inequality
 $\frac{w_{|A|}(\sigma)}{w_{|A|}(\bar{\sigma})} \leq \eta_k$
 holds for some 
 \[ \eta_k < \frac{1}{\left(\binom{n}{k} k! - 1 \right)} \; . \]
 Then the inequality
 \[ L_k\gamma_1\dotsm\gamma_k\leq|\lambda_1\dotsm\lambda_k| \]
 holds with
 \[ L_k = \frac{1}{\binom{n}{k}}\left(1 -\eta_k \left(\binom{n}{k} k! - 1 \right) \right) \; . \]
\end{thm}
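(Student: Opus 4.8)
The plan is to mimic the proof of Theorem~\ref{lower_bound}: everything reduces to producing a positive constant $C_k$ for which $C_k\,\tr^k_{\maxtimes}|A| \leq |\tr^k A|$ holds, and then invoking Lemma~\ref{lower_bound_lemma}, which yields the conclusion with $L_k = C_k/\binom{n}{k}$.

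First I would expand the $k$-th trace as a signed sum over all permutations of all $k$-subsets:
\[
\tr^k A = \sum_{\substack{I\subset[n]\\\card{I}=k}} \det A[I,I] = \sum_{\substack{I\subset[n]\\\card{I}=k}}\ \sum_{\sigma\in S_I} \sgn(\sigma)\, w_A(\sigma) \; ,
\]
a sum of exactly $\binom{n}{k}k!$ terms indexed by pairs $(I,\sigma)$ with $\card{I}=k$, $\sigma\in S_I$. The term attached to the distinguished permutation $\bar{\sigma}$ (whose support I call $\overline{I}_k$) has modulus $|w_A(\bar{\sigma})| = w_{|A|}(\bar{\sigma})$, which by hypothesis equals the maximum in~\eqref{max_perm_weight}, i.e.\ $\tr^k_{\maxtimes}|A|$. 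Each of the remaining $\binom{n}{k}k!-1$ terms satisfies $|w_A(\sigma)| = w_{|A|}(\sigma) \leq \eta_k\, w_{|A|}(\bar{\sigma}) = \eta_k\,\tr^k_{\maxtimes}|A|$, again by hypothesis. A single triangle-inequality estimate then yields
\[
|\tr^k A| \;\geq\; \tr^k_{\maxtimes}|A| \,-\, \eta_k\left(\binom{n}{k}k!-1\right)\tr^k_{\maxtimes}|A| \;=\; C_k\,\tr^k_{\maxtimes}|A| \; , \qquad C_k := 1 - \eta_k\left(\binom{n}{k}k!-1\right) \; .
\]

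Next I would observe that the standing assumption $\eta_k < 1/\bigl(\binom{n}{k}k!-1\bigr)$ is exactly what makes $C_k>0$; in particular $\tr^k A \neq 0$, so the hypotheses of Lemma~\ref{lower_bound_lemma} are in force ($k$ being a saturated index for $q_{|A|}$ by assumption, since $A$, the eigenvalues and $k$ are taken as in Theorem~\ref{lower_bound}). The lemma then produces $\frac{C_k}{\binom{n}{k}}\gamma_1\dotsm\gamma_k \leq |\lambda_1\dotsm\lambda_k|$, which is the asserted bound with $L_k = C_k/\binom{n}{k}$.

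There is no real obstacle here; the only thing requiring care is the combinatorial bookkeeping, which differs slightly from that in the proof of Theorem~\ref{lower_bound}. Since $\eta_k$ is now assumed to control \emph{every} permutation of \emph{every} $k$-subset other than $\bar{\sigma}$ --- including the other permutations of $\overline{I}_k$ itself --- the relevant count of ``error terms'' is $\binom{n}{k}k!-1$ rather than $\bigl(\binom{n}{k}-1\bigr)k!$. Consequently one need not isolate the block $\det A[\overline{I}_k,\overline{I}_k]$, nor assume separately that it is nonzero: its nonvanishing (and hence that of $\tr^k A$) is a byproduct of $C_k>0$.
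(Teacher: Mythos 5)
Your proposal is correct and follows essentially the same route as the paper: expand $\tr^k A$ over all pairs $(I,\sigma)$, isolate the unique maximizing term $w_{|A|}(\bar{\sigma})=\tr^k_{\maxtimes}|A|$, bound the remaining $\binom{n}{k}k!-1$ terms by $\eta_k\tr^k_{\maxtimes}|A|$ via the triangle inequality, and conclude through Lemma~\ref{lower_bound_lemma} with $C_k=1-\eta_k\bigl(\binom{n}{k}k!-1\bigr)$. Your side remark that the hypothesis $\det A[\overline{I}_k,\overline{I}_k]\neq 0$ becomes superfluous here is a fair observation, consistent with the paper's own (unstated) use of the hypotheses.
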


\begin{proof}
The arguments of the proof are the same as for Theorem~\ref{lower_bound}.
In the present case, we have
\[
\begin{aligned}
|\tr^k A| &= \Bigl|\sum_{\substack{I\in[n]\\\card{I}=k}}\det A[I,I]\Bigr|\\
&\geq \Bigl|w_{|A|}(\bar{\sigma})\Bigr|-\Bigl|\sum_{\sigma\neq\bar{\sigma}} w_{|A|}(\sigma)\Bigr|\\
&\geq \tr^k_{\maxtimes} |A| - \left( \binom{n}{k} k! - 1 \right) \eta_k \tr^k_{\maxtimes} |A| \; ,
\end{aligned}
\]
and we conclude applying Lemma~\ref{lower_bound_lemma}.
\end{proof}

\section{Optimality of the upper bound and comparison with the bounds for polynomial roots}\label{sec-compar}
We now discuss briefly the optimality of the upper bound for some special classes of matrices. 
Throughout this paragraph, if $A$ is a complex $n \times n$ matrix, then 
$\lambda_1, \dots, \lambda_n$ will be its eigenvalues (ordered by nonincreasing absolute value),
and $\gamma_1 \geq \cdots \geq \gamma_n$ will be its tropical eigenvalues.

\subsection{Monomial matrices.}
Recall that a {\em monomial matrix} is the product of a diagonal matrix (with
non-zero diagonal entries) and of a permutation matrix. We next show
that
the upper bound is tight for monomial matrices.
\begin{prop}
If $A$ is a monomial matrix, then, for all $k\in[n]$,
the inequality in Theorem~\ref{upper_bound} is tight,
\begin{equation}
\label{eq_perm}
|\lambda_1\dotsm\lambda_k| = \rho(\ext^k_{\per} (\pat A)) \gamma_1\dotsm\gamma_k\qquad\forall k\in[n]
\end{equation}
\end{prop}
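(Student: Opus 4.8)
The plan is to evaluate both sides of~\eqref{eq_perm} explicitly, after first checking that the multiplicative constant $\rho(\ext^k_{\per}(\pat A))$ equals $1$ when $A$ is monomial; with that in hand no appeal to Theorem~\ref{upper_bound} is needed, and one gets equality of both sides at once (which is also a consistency check against the inequality ``$\leq$'' already proved). Throughout, write $A = D\,P_\pi$, where $D = \Diag(d_1,\dots,d_n)$ has nonzero diagonal entries and $P_\pi$ is the permutation matrix of some $\pi\in S_n$, so that $a_{i,\pi(i)} = d_i$ and all other entries of $A$ vanish.

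First I would describe the ordinary eigenvalues of $A$. Decomposing $\pi$ into cycles and reordering the basis accordingly, $A$ becomes block diagonal with one weighted cyclic block $A_c$ per cycle $c$ of $\pi$; if $\ell_c$ is the length of $c$, then going once around $c$ shows $A_c^{\ell_c} = w_c\, I_{\ell_c}$ with $w_c = \prod_{i\in c} d_i \neq 0$ (equivalently, the characteristic polynomial of $A_c$ is $x^{\ell_c}-w_c$). Hence every eigenvalue $\mu$ of $A_c$ satisfies $\mu^{\ell_c}=w_c$, so $|\mu| = |w_c|^{1/\ell_c} = \mu_{|A|}(c)$, the mean weight of $c$ with respect to $|A|$; and $A_c$, being of size $\ell_c$, contributes exactly $\ell_c$ such eigenvalues. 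Thus the multiset $\{|\lambda_1|,\dots,|\lambda_n|\}$ is the one in which each $\mu_{|A|}(c)$ occurs with multiplicity $\ell_c$, over the cycles $c$ of $\pi$.

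Next I would compute the tropical eigenvalues of $A$, i.e.\ the roots of $q_{|A|}$, noting that $|A|$ is again monomial with pattern $P_\pi$. Using the same block decomposition and the multiplicativity of the max-times permanent on block-diagonal matrices, the max-times polynomial function attached to $q_{|A|}$ factors as $\prod_{c}\bigl(X^{\ell_c}\oplus w_{|A|}(c)\bigr)$, because within one cyclic block of size $\ell_c$ the only permutations of nonzero weight are the identity and the full cycle, contributing $X^{\ell_c}$ and $w_{|A|}(c)=\prod_{i\in c}|d_i|$. Since the max-times polynomial function $x\mapsto\max(x^{\ell},w)$ coincides with $(x\oplus w^{1/\ell})^{\ell}$, the factorisation theorem of Cuninghame-Green and Meijer recalled above identifies the tropical roots of $q_{|A|}$ as exactly $w_{|A|}(c)^{1/\ell_c}=\mu_{|A|}(c)$, with multiplicity $\ell_c$, over the cycles $c$ of $\pi$. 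Comparing with the previous paragraph, $\{\gamma_1,\dots,\gamma_n\}=\{|\lambda_1|,\dots,|\lambda_n|\}$ as multisets, hence $|\lambda_1\dotsm\lambda_k| = \gamma_1\dotsm\gamma_k$ for every $k\in[n]$.

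It remains to evaluate the constant. Since $A$ is monomial, $\pat A = P_\pi$; for $k$-subsets $I,J$ of $[n]$, the permanent of $(P_\pi)[I,J]$ is nonzero precisely when $\pi(I)=J$, in which case it equals $1$. Therefore $\ext^k_{\per}(\pat A)$ is itself a permutation matrix, namely the one induced by $\pi$ on the $k$-subsets of $[n]$, so $\rho(\ext^k_{\per}(\pat A))=1$. Combining this with the previous paragraph yields~\eqref{eq_perm}. All the ingredients are elementary; the only point requiring a little care is the bookkeeping of multiplicities in the factorisation of the tropical characteristic polynomial of a cyclic block, which is where I expect the (mild) technical work to lie.
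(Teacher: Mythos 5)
Your proposal is correct and follows essentially the same route as the paper: decompose the monomial matrix into weighted cyclic blocks, observe that for each block the ordinary characteristic polynomial is $x^{\ell}-w$ while the tropical one is $x^{\ell}\oplus |w|$ (so the moduli of the eigenvalues coincide with the tropical eigenvalues with multiplicities), and then check that $\ext^k_{\per}(\pat A)$ is a permutation matrix, whence the constant equals $1$. The only cosmetic difference is that the paper reduces to a single cycle via the block-diagonal factorization of the (tropical) characteristic polynomial, whereas you keep all cyclic factors at once; the content is the same.
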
 
\begin{proof}
We claim that if $A$ is a monomial matrix, then, the absolute values of the eigenvalues
of $A$ coincide with the tropical eigenvalues of $|A|$, counted
with multiplicities. 

To see this, assume that $A=DC$ where $D$
is diagonal and $C$ is a matrix representing a permutation
$\sigma$. If $\sigma$ consists of several cycles, then, $DC$
has a block diagonal structure, and so, the characteristic
polynomial of $A$ is the product of the characteristic
polynomials of the diagonal blocks of $A$. The same
is true for the tropical characteristic polynomial of $|A|$.
Hence, it suffices to show the claim when $\sigma$
consists of a unique cycle. Then, denoting by $d_1,\dots,d_n$
the diagonal terms of $D$, expanding the determinant of $xI-A$ or the permanent
of $xI\oplus A$, one readily checks that
the characteristic polynomial of $A$ is $x^n-d_1\dots d_n$,
whereas the tropical characteristic polynomial of $|A|$
is $x^n\oplus |d_1\dots d_n|$. It follows that the
eigenvalues of $A$ are the $n$th roots of $d_1\dots d_n$,
whereas the tropical eigenvalues of $|A|$ are all equal to $|d_1\dots d_n|^{1/n}$. So, the claim is proved.

It remains to show that $\rho(\bigwedge^k_{\per} (\pat A)) = 1$. Note that $\pat A=C$.
We claim that $\bigwedge^k_{\per} C$ is a permutation matrix.
In fact, for any fixed $k\in[n]$,
let $I$ be a subset of cardinality $k$ of $[n]$.
Since $C$ is a permutation matrix, there is one and only one subset $J\subset[n]$ such that
$\per C[I,J] \neq 0$: precisely, if $C$ represents the permutation $\sigma\colon[n]\to[n]$,
then $\per C[I,\sigma(I)] = 1$ and $\per C[I,J] = 0 \; \forall J \neq \sigma(I)$.
This means that each row of $\bigwedge^k_{\per} C$ contains exactly one $1$, and the other entries are zeroes.
Since the same reasoning is also valid for columns, we can conclude that $\bigwedge^k_{\per} C$ is a permutation matrix,
and as such its spectral radius is $1$.
\end{proof}

\subsection{Full matrices.}
Monomial matrices are among the sparsest matrices we can think of.
One may wonder what happens in the opposite case, when all the matrix entries are nonzero. We next discuss a class of instances of this kind,
in which the upper bound is not tight.
We only consider the case $k = n$ for brevity,
although it is not the only case for which the equality fails to hold.
\begin{prop}
Let $A = \bigl(a_{i,j}\bigr)$ be a $n\times n$ complex matrix, $n\geq 3$,
and suppose ${|a_{i,j}| = 1}$ for all $i,j \in [n]$.
Then the inequality in Theorem~\ref{upper_bound} can not be tight for $k = n$.
\end{prop}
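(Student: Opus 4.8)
The plan is to show that when all $|a_{i,j}| = 1$, the tropical side of the inequality in Theorem~\ref{upper_bound} for $k=n$ is exactly $1$, while the spectral side $|\lambda_1\dotsm\lambda_n| = |\det A|$ is the absolute value of a sum of $n!$ complex numbers each of modulus $1$; by a parity/cancellation argument this absolute value must be strictly smaller than $\rho(\ext^n_{\per}(\pat A)) = n!$.

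First I would pin down both sides. Since $|A|$ is the all-ones matrix, $\per_{\maxtimes}|A| = 1$ and more generally $\tr^k_{\maxtimes}|A| = 1$ for every $k$, so the tropical characteristic polynomial is $q_{|A|} = X^n \oplus X^{n-1} \oplus \dots \oplus 1$, whose associated function is $\max_k x^k$; its only root is $\gamma_1 = \dots = \gamma_n = 1$, hence $\gamma_1\dotsm\gamma_n = 1$. Also $\pat A$ is the all-ones matrix, so $\ext^n_{\per}(\pat A)$ is the $1\times 1$ matrix $(\per(\pat A)) = (n!)$, giving $\rho(\ext^n_{\per}(\pat A)) = n!$. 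Thus the claim reduces to showing $|\det A| < n!$ strictly.

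The key step is to rule out $|\det A| = n!$. Write $\det A = \sum_{\sigma \in S_n} \sgn(\sigma)\, w_A(\sigma)$ with $|w_A(\sigma)| = 1$ for each $\sigma$. The triangle inequality gives $|\det A| \leq n!$, with equality only if all the terms $\sgn(\sigma) w_A(\sigma)$ have the same argument, i.e. $\sgn(\sigma) w_A(\sigma) = c$ for a fixed unimodular constant $c$ and every $\sigma$. Equivalently $w_A(\sigma) = c\,\sgn(\sigma)$ for all $\sigma$. I would then derive a contradiction from the multiplicativity-like structure of $w_A$: for instance, comparing $\sigma = \mathrm{id}$ (giving $a_{1,1}\dotsm a_{n,n} = c$) with a transposition $\tau = (1\,2)$ (giving $a_{1,2}a_{2,1}a_{3,3}\dotsm a_{n,n} = -c$) yields $a_{1,2}a_{2,1} = -a_{1,1}a_{2,2}$; doing the same with the transposition $(1\,3)$ and with $(2\,3)$ and composing, one finds that the product $w_A$ of a $3$-cycle is forced to equal $c$ and also $-c$ at once (a $3$-cycle is even, but it is a product of two transpositions each flipping the sign), which is the contradiction since $c \neq 0$. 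Concretely: let $\pi = (1\,2\,3)$, an even permutation, so we would need $w_A(\pi) = c$; but $w_A(\pi)\, w_A(\mathrm{id}) = \bigl(a_{1,2}a_{2,3}a_{3,1}\bigr)\bigl(a_{1,1}a_{2,2}a_{3,3}\bigr)$, while $w_A((1\,2))\,w_A((1\,3)) $ involves the same six off-diagonal-or-diagonal factors rearranged, forcing $c^2 = (-c)(-c) = c^2$ consistently — so I must instead track a genuine sign obstruction. The clean way: the map $\sigma \mapsto w_A(\sigma)/c$ would have to be a group homomorphism $S_n \to \{\pm 1\}$ equal to $\sgn$, which is consistent; the real obstruction is that $w_A(\sigma)$ is determined by the matrix entries and cannot be made to satisfy $w_A(\sigma\rho) = w_A(\sigma)w_A(\rho)/c$ for all pairs unless the entries degenerate, e.g. all $a_{i,j}$ equal, which forces $\det A = 0 \neq \pm n!$ for $n \geq 2$.

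The main obstacle I expect is making this last cancellation argument airtight and short. The cleanest route is probably: assume $w_A(\sigma) = c\,\sgn(\sigma)$ for all $\sigma$; then for any $i \neq j$, taking $\sigma = \mathrm{id}$ and $\sigma = (i\,j)$ gives $a_{i,j}a_{j,i} = -\,a_{i,i}a_{j,j}$; hence $a_{i,j}a_{j,i}$ has a value determined by the diagonal. Now for three distinct indices $i,j,k$ ($n\geq 3$ is used here), the $3$-cycle $(i\,j\,k)$ gives $a_{i,j}a_{j,k}a_{k,i} = c\cdot(\text{rest})^{-1}$, and multiplying the three relations $a_{i,j}a_{j,i} = -a_{i,i}a_{j,j}$, $a_{j,k}a_{k,j} = -a_{j,j}a_{k,k}$, $a_{k,i}a_{i,k} = -a_{i,i}a_{k,k}$ against the product of the two $3$-cycles $(i\,j\,k)$ and $(i\,k\,j)$ (which satisfy $w_A((i\,j\,k))\,w_A((i\,k\,j)) = (a_{i,j}a_{j,k}a_{k,i})(a_{i,k}a_{k,j}a_{j,i}) = c^2$) yields $-(a_{i,i}a_{j,j}a_{k,k})^2 = c^2$; but also $c = a_{1,1}\dotsm a_{n,n}$ forces, after dividing out the other diagonal factors, $(a_{i,i}a_{j,j}a_{k,k})^2 = c^2$ as well (using the same relation structure on the complementary indices), so $c^2 = -c^2$, i.e. $c = 0$, contradicting $|c| = 1$. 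Therefore $|\det A| < n! = \rho(\ext^n_{\per}(\pat A))\,\gamma_1\dotsm\gamma_n$, so the bound of Theorem~\ref{upper_bound} is strict for $k = n$, which completes the proof.
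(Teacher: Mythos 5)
Your proposal is correct in substance, but it takes a genuinely different route from the paper. Both arguments start the same way: the tropical side is $\gamma_1\dotsm\gamma_n=1$ and the combinatorial constant is $\rho(\ext^n_{\per}(\pat A))=n!$, so everything reduces to showing $|\det A|<n!$. The paper dispatches this in one line by invoking Hadamard's determinant inequality, $|\det A|\leq n^{n/2}$, together with $n^{n/2}<n!$ for $n\geq 3$; this is shorter and even gives a quantitative gap between the two sides. You instead analyze the equality case of the triangle inequality: $|\det A|=n!$ would force $w_A(\sigma)=c\,\sgn(\sigma)$ for all $\sigma\in S_n$ and a fixed unimodular $c$, and you kill this with a sign obstruction coming from transpositions versus $3$-cycles. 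Your final computation is the right one and does work: from $\mathrm{id}$ and $(i\,j)$ one gets $a_{i,j}a_{j,i}=-a_{i,i}a_{j,j}$, and comparing the product of the three such relations with the product of the weights of $(i\,j\,k)$ and $(i\,k\,j)$ yields $(a_{i,i}a_{j,j}a_{k,k})^2=-(a_{i,i}a_{j,j}a_{k,k})^2$, impossible since all entries are unimodular. Two points need tidying: the weights of the $3$-cycles also carry the fixed-point diagonal factor $\prod_{l\notin\{i,j,k\}}a_{l,l}$, which you drop (it cancels, but you should say so rather than write $w_A((i\,j\,k))w_A((i\,k\,j))=(a_{i,j}a_{j,k}a_{k,i})(a_{i,k}a_{k,j}a_{j,i})$ outright); and the exploratory middle passage about $\sigma\mapsto w_A(\sigma)/c$ being a homomorphism is a red herring ($w_A$ is not multiplicative under composition) and should simply be deleted. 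A pleasant feature of your route is that it is elementary (no Hadamard inequality) and makes transparent where $n\geq 3$ enters --- you need three distinct indices, and indeed for $n=2$ the bound is tight (e.g.\ $a_{1,1}=a_{1,2}=a_{2,2}=1$, $a_{2,1}=-1$ gives $|\det A|=2=2!$) --- whereas in the paper $n\geq 3$ enters only through the numerical comparison $n^{n/2}<n!$.
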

\begin{proof}
For any couple $(I,J)$ of $k$-subsets of $[n]$, the $(I,J)$ element of the matrix $\bigwedge^k_{\per} (\pat A)$
is given by the permanent of a $k\times k$ matrix of ones, that is $k!$;
so $\bigwedge^k_{\per} (\pat A)$ is a $\binom{n}{k} \times \binom{n}{k}$ matrix with all entries equal to $k!$.
Its spectral radius is therefore $\binom{n}{k} k!$ (and $(1,\dots,1)^\top$ is an eigenvector for the maximum eigenvalue).
For $k = n$, $\rho(\bigwedge^k_{\per} (\pat A))$ reduces to $n!$,
so our upper bound would be $|\lambda_1,\dotsm,\lambda_n|\leq n!\cdot\gamma_1\dotsm\gamma_n$.
Now, the left-hand side can be thought of as $|\det A|$,
and on the other hand $\gamma_1=\dots = \gamma_n = 1$
(the tropical characteristic polynomial is 
$q_A(x) = x^n\oplus x^{n-1}\oplus\dots\oplus x\oplus 1 = x^n\oplus 1 = (x\oplus 1)^n$
$\forall x\geq 0$).
So the inequality in Theorem~\ref{upper_bound} is equivalent to $|\det A|\leq n!$.
But the well-known Hadamard bound for the determinant yields in this case $|\det A|\leq(\sqrt{n})^n = n^{n/2}$,
and since $n^{n/2} < n!\;\forall n\geq 3$ the inequality of Theorem~\ref{upper_bound} can not be tight.
\end{proof}

\subsection{Comparison with the Hadamard-P\'olya's bounds for polynomial roots.}
Finally, we discuss the behavior of the upper bound of Theorem~\ref{upper_bound} for the case of a companion matrix. Since the eigenvalues of a companion matrix are exactly the roots of its associated polynomial,
this will allow a comparison between the present matrix bounds and
the upper bound of Hadamard and P\'olya discussed
in the introduction. 
We start by showing that the usual property of companion matrices remains true in the tropical setting.
\begin{lem}
Consider the polynomial $p(x) = x^n + a_{n-1} x^{n-1} + \dots + a_1 x + a_0$, and let $A$ be its companion matrix.
Then the tropical eigenvalues of $A$ are exactly the tropical roots of $p$.
\end{lem}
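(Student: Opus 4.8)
The plan is to compute the tropical characteristic polynomial $q_{|A|}$ of the companion matrix $A$ explicitly and observe that its associated polynomial function coincides with the max-times relative $p^\times$ of $p$, up to the (irrelevant) passage to concavified coefficients. Recall that the companion matrix of $p(x) = x^n + a_{n-1}x^{n-1} + \dots + a_0$ has $1$'s on the subdiagonal, the entries $-a_0, -a_1, \dots, -a_{n-1}$ in the last column, and zeros elsewhere; so $|A|$ has $1$'s on the subdiagonal, $|a_0|, \dots, |a_{n-1}|$ in the last column. First I would write $\tr^j_{\maxtimes}|A| = \max_{\card{I}=j} \per_{\maxtimes}|A|[I,I]$ and argue that for a proper principal submatrix (one not equal to the whole index set $[n]$), the only permutations of $I$ contributing a nonzero term to $\per_{\maxtimes}|A|[I,I]$ are products of fixed points on the diagonal of $|A|[I,I]$; but the diagonal of $|A|$ is zero except possibly in position $(n,n)$, where it equals $|a_{n-1}|$. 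Hence for $j < n$ the only nonzero principal minor comes from $I$ containing $n$ together with $j-1$ other indices all fixed — impossible unless those other diagonal entries are nonzero, which they are not — so in fact $\tr^j_{\maxtimes}|A| = 0$ for $1 \le j \le n-1$, \emph{except} we must be more careful: a principal submatrix indexed by a contiguous block $\{i, i+1, \dots, n\}$ does contain a cycle using the subdiagonal $1$'s and the last-column entry $|a_{i-1}|$.

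So the key combinatorial step is to identify, for each $j \in [n]$, which principal $j\times j$ submatrices of $|A|$ admit a nonzero permanent and to evaluate it. I expect the answer to be: $\per_{\maxtimes}|A|[I,I] \ne 0$ only when $I = \{n-j+1, n-j+2, \dots, n\}$, in which case the unique contributing permutation is the $j$-cycle $(n-j+1 \; n-j+2 \; \cdots \; n)$ going up the subdiagonal and closing via the last column, with weight $1 \cdots 1 \cdot |a_{n-j}| = |a_{n-j}|$. This gives $\tr^j_{\maxtimes}|A| = |a_{n-j}|$ for all $j \in [n]$ (with the convention $a_n = 1$ for $j=n$), and therefore $q_{|A|} = X^n \oplus \bigoplus_{j=1}^{n} |a_{n-j}| X^{n-j} = \bigoplus_{k=0}^n |a_k| X^k = p^\times$ as formal polynomials. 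Then by definition the tropical eigenvalues of $A$ are the roots of $q_{|A|} = p^\times$, which are exactly the tropical roots of $p$, and the multiplicities agree.

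The main obstacle is the bookkeeping in that combinatorial step: one must verify carefully that no principal submatrix other than the trailing block $\{n-j+1,\dots,n\}$ can have a nonzero permanent. The cleanest way is graph-theoretic: interpret $|A|$ as the adjacency matrix of a digraph on $[n]$ whose edges are $i \to i-1$ for $2 \le i \le n$ (from the subdiagonal) and $i \to n$ for $1 \le i \le n$ (from the last column). A nonzero term in $\per_{\maxtimes}|A|[I,I]$ corresponds to a permutation of $I$ whose graph is a union of vertex-disjoint cycles covering $I$. Since every vertex $i \ne n$ has out-neighbours $\{i-1, n\}$ and every vertex has in-neighbour set contained in $\{i+1\} \cup \{n \text{ only if } i=n\}$, a short case analysis shows that any cycle in this digraph must be of the form $(m \; m+1 \; \cdots \; n)$ for some $m$, and disjoint such cycles cannot coexist (they all pass through $n$ unless they are loops, and there are no loops at $i < n$). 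This forces $I$ to be a single trailing block. Once this is established the rest is immediate; I would present the digraph argument as the core of the proof and relegate the identification $q_{|A|} = p^\times$ and the invocation of the definition of tropical roots to one or two concluding sentences.
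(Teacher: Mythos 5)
Your proposal is correct and follows essentially the same route as the paper: both reduce the claim to showing $\tr^k_{\maxtimes}|A| = |a_{n-k}|$ by identifying the trailing principal block $\{n-k+1,\dots,n\}$ as the only one with nonzero tropical permanent, a step the paper dismisses as ``easy to check'' and which your digraph cycle analysis justifies in detail (your use of the transposed companion-matrix convention, with $1$'s on the subdiagonal and coefficients in the last column, is immaterial since permanents are invariant under transposition). The only blemish is the misplaced parenthetical ``convention $a_n=1$ for $j=n$'', which should refer to the leading term ($j=0$), not $j=n$ where $\tr^n_{\maxtimes}|A|=|a_0|$.
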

\begin{proof}
The matrix is
\[
A = 
\begin{pmatrix}
	0 	& 1 	& 0 	& \cdots& 0	\\
	0 	& 0	& 1	& \cdots& 0	\\
	\vdots 	& \vdots& \vdots& \ddots& \vdots\\
	0 	& 0	& 0	& \cdots& 1	\\
	-a_0	&-a_1	&-a_2	& \cdots&-a_{n-1}
\end{pmatrix} \;  .
\]
By definition, its tropical eigenvalues are the tropical roots of the polynomial $q_A(x) = \bigoplus_{k=0}^n \tr_{\maxtimes}^k \!|A| \,\,x^{n-k}$,
so to verify the claim it is sufficient to show that $\tr_{\maxtimes}^k \!|A| = |a_{n-k}|$ for all $k\in\{0,\dots,n\}$.
Recall that $\tr_{\maxtimes}^k \!|A|$ is the maximal tropical
permanent 
of a $k \times k$ principal submatrix of $|A|$
(see Equation \eqref{tropical_trace}).
It is easy to check that the only principal submatrices 
with a non-zero contribution
are those of the form $|A|[I_k,I_k]$ with $I_k = \{n-k+1,\dots,n\}$,
and in this case
$\per_{\maxtimes}|A|[I_k,I_k]=
|a_{n-k}|$.
\end{proof}

\begin{lem}\label{lem-explicit}
If $A$ is the companion matrix of a polynomial of degree $n$, then, 
\[
 {\rho(\ext^k_{\per} (\pat A)) \leq \min (k+1, n-k+1)}
\enspace .
\]
\end{lem}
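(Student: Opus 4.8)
The plan is to compute, or at least bound, the permanental compound $\ext^k_{\per}(\pat A)$ for the companion matrix $A$ explicitly, using the very special sparsity structure of $\pat A$. Recall $\pat A$ is the $0/1$ matrix with ones exactly in positions $(i,i+1)$ for $i=1,\dots,n-1$ (the superdiagonal) and in the entire last row $(n,j)$ for $j=1,\dots,n$. The entry ${(\ext^k_{\per}\pat A)}_{I,J} = \per (\pat A)[I,J]$ counts the bijections $\sigma\colon I\to J$ with $(\pat A)_{i,\sigma(i)}=1$ for all $i\in I$. First I would analyze, for a fixed $k$-subset $I$, which $J$ give a nonzero permanent and what the value is. If $n\notin I$, then every $i\in I$ must be sent to $i+1$, so $J=I+1:=\{i+1 : i\in I\}$ is forced and the permanent is $1$; thus the row of $\ext^k_{\per}(\pat A)$ indexed by such $I$ has a single nonzero entry, equal to $1$. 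If $n\in I$, write $I=I'\cup\{n\}$ with $I'\subset[n-1]$; the index $n$ can go anywhere in $J$, while each $i\in I'$ is again forced to $i+1$ unless that slot is taken — so the admissible $J$ are of the form $(I'+1)\setminus\{i+1\}\cup\{t\}$ for appropriate choices, and each such permanent is still $1$ or a small count. The key observation is that every row of $M:=\ext^k_{\per}(\pat A)$ has at most $n-k+1$ nonzero entries (one for each $I$ with $n\notin I$; at most $n-k+1$ possible targets of $n$ for the rows with $n\in I$, since the remaining $k-1$ indices force all but constrain the image), each entry being a nonnegative integer, and similarly every column has at most $n-k+1$ nonzero entries.

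Next I would bound $\rho(M)$ by the two standard inequalities: $\rho(M)$ is at most the maximum absolute row sum and at most the maximum absolute column sum of any nonnegative matrix. Combined with the structural count above, this gives $\rho(M)\le n-k+1$ provided I check that all nonzero entries are equal to $1$ — which holds because the superdiagonal plus last-row pattern admits at most one bijection $I\to J$ for each admissible pair (once $n$'s image is chosen, all other images are forced, so $\per=1$ or $0$). For the other half of the bound, $\rho(M)\le k+1$, I would instead exploit that a row indexed by $I$ with $n\notin I$ contributes only $1$ to the spectral radius via its single entry, and count that the "heavy" rows (those with $n\in I$) are few: the number of nonzero entries in the rows with $n\in I$ is governed by how many indices in $I'$ lie consecutively ending at $n-1$. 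A cleaner route for the $k+1$ bound is to observe that $M$ is (permutation-)similar to a matrix in which the heavy block — the $\binom{n-1}{k-1}\times\binom{n-1}{k-1}$ submatrix indexed by subsets containing $n$ — has row sums at most $k+1$, while the remaining rows have row sum $1$; since $\rho$ of a block-triangular (or reducible) nonnegative matrix is the max of the $\rho$'s of its diagonal blocks, and the light block has $\rho=1\le k+1$, we get $\rho(M)\le\max(1,k+1)=k+1$. Taking the minimum of the two bounds yields $\rho(\ext^k_{\per}(\pat A))\le\min(k+1,n-k+1)$.

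The main obstacle I anticipate is the careful bookkeeping in the second bound: showing that the submatrix of $M$ indexed by $k$-subsets containing $n$ has row (and column) sums bounded by $k+1$, and verifying that the remaining part of $M$ is a genuine invariant (reducible) block so that its spectral radius doesn't inflate the total. Concretely, one must check that a $k$-subset $I$ with $n\notin I$ can only be the image $J$ of another $k$-subset under an admissible bijection if that source subset also omits $n$, i.e.\ the "light" rows and columns form a closed sub-block; this follows because if $n\in I$ then, by the pattern, the image $J=\sigma(I)$ of any admissible $\sigma$ must contain $\sigma(i_0)$ for the largest $i_0<n$ in $I$... one has to argue that such $J$ can still be forced to contain or avoid $n$ only in controlled ways. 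I would handle this by a direct case split on whether the source and target subsets contain $n$, tabulating the four cases; the off-diagonal blocks in the appropriate ordering vanish, giving block-triangularity, after which Proposition~\ref{spectral_radius_inequalities} and the elementary row/column-sum bound for $\rho$ of a nonnegative matrix finish the argument. The first bound ($n-k+1$) is the easier of the two and essentially immediate once the entries are seen to be $0/1$ with the stated sparsity.
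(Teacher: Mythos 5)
Your first half is sound and is essentially the paper's argument: the entries of $M=\ext^k_{\per}(\pat A)$ are $0$ or $1$ because for each pair $(I,J)$ at most one admissible bijection exists (every $i\neq n$ is forced to $i+1$, and then the image of $n$ is forced), and each row has at most $n-k+1$ nonzero entries (exactly one if $n\notin I$, and one for each choice of the image $t\notin I'+1$ of $n$ if $I=I'\cup\{n\}$), so the row-sum bound gives $\rho(M)\leq n-k+1$. The gap is in the $k+1$ half, and it is twofold. First, your column count is wrong: a column indexed by $J$ can only be reached from $I=J-1$ (no use of row $n$, possible only if $1\notin J$) or from $I=\bigl((J\setminus\{j_0\})-1\bigr)\cup\{n\}$ for one of the $k$ choices of $j_0\in J$, so each column has at most $k+1$ nonzero entries, not $n-k+1$. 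This corrected count is exactly what the paper uses: ${\Vert M\Vert}_1\leq k+1$ and ${\Vert M\Vert}_\infty\leq n-k+1$ together give $\rho(M)\leq\min(k+1,n-k+1)$ with no further structure needed. (Also, the admissible targets from a heavy row are $J=(I'+1)\cup\{t\}$ with $t\notin I'+1$, not $(I'+1)\setminus\{i+1\}\cup\{t\}$; the images of the elements of $I'$ are never displaced.)

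Second, the ``cleaner route'' you propose for the $k+1$ bound fails. The partition into subsets containing $n$ (heavy) and not containing $n$ (light) does not make $M$ block triangular: a light row $I$ with $n-1\in I$ has its unique nonzero entry in the heavy column $I+1\ni n$, and a heavy row $I=I'\cup\{n\}$ with $n-1\notin I'$ hits light columns (take $t\neq n$); already for $n=3$, $k=2$ the matrix $M$ is irreducible, so no nontrivial invariant block exists. Moreover, the heavy diagonal block does not have row sums bounded by $k+1$: if $\{n-1,n\}\subseteq I$ (possible for $k\geq2$), then $n\in I'+1\subseteq J$ for every admissible target, so that row of the heavy block has $n-k+1$ nonzero entries, which exceeds $k+1$ whenever $n>2k$. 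Hence even granting block triangularity, your argument would only reproduce the $n-k+1$ bound. Replacing it by the column-sum count above repairs the proof and makes it coincide with the paper's norm argument.
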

\begin{proof}
First, we note that nonzero entries of $\ext^k_{\per} (\pat A)$ can only be $1$'s, because $\pat A$ is a $(0,1)$-matrix, and the tropical permanent of any of its square submatrices has at most one non-zero term.
By computing explicitly the form of $\ext^k_{\per} (\pat A)$, 
for example following the method used by Moussa in~\cite{moussa},
we see that each column of $\ext^k_{\per} (\pat A)$ has either one or $k+1$ nonzero entries,
and each row has either one or $n-k+1$ nonzero entries.
In terms of matrix norms, we have ${\Vert \ext^k_{\per} (\pat A) \Vert}_1 = k+1$,
and ${\Vert \ext^k_{\per} (\pat A) \Vert}_\infty = n-k+1$.
Since both these norms are upper bounds for the spectral radius,
we can conclude that ${\rho(\ext^k_{\per} (\pat A)) \leq \min (k+1, n-k+1)}$.
\end{proof}

Thus, by specializing Theorem~\ref{upper_bound} to companion matrices, 
we recover the version of the upper bound~\eqref{e-ost} originally
derived by Hadamard, with the multiplicative constant $k+1$. By comparison,
the multiplicative constant in Lemma~\ref{lem-explicit} is smaller due
to its symmetric nature. However, it was observed by Ostrowski
that the upper bound in~\eqref{e-ost} can be strengthened
by exploiting symmetry. We give a formal argument
for the convenience of the reader.
\begin{lem}
	Let $P = \{ p \in \C[z] \mid \deg p =n\}$
	be the set of complex polynomials of degree $n$.
	Denote the roots and the tropical roots as above.
	Suppose that the inequality $|\zeta_1 \dotsm \zeta_k| \leq f(k)\cdot\alpha_1 \dotsm \alpha_k$
	holds for some function $f$, for all $k \in [n]$
	and for all polynomials $p \in P$.
	Then the inequality $|\zeta_1 \dotsm \zeta_k| \leq f(n-k) \cdot \alpha_1 \dotsm \alpha_k$
	also holds for all $k \in [n]$ and for all polynomials $p \in P$.
\end{lem}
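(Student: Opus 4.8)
The plan is to exploit the standard duality between a polynomial and its reversal. Given $p(z) = \sum_{k=0}^n a_k z^k$ of degree $n$, consider the reversed polynomial $\tilde p(z) = z^n p(1/z) = \sum_{k=0}^n a_{n-k} z^k$; since $\deg p = n$ we have $a_n \neq 0$, so $\tilde p(0) = a_n \neq 0$ and $\tilde p$ also has degree $n$, i.e. $\tilde p \in P$. The roots of $\tilde p$ are exactly the reciprocals $1/\zeta_1, \dots, 1/\zeta_n$ of the roots of $p$ (all nonzero, because $a_0 = \tilde p$'s leading coefficient is $\neq 0$ forces $0$ not to be a root — wait, one must be slightly careful: if $a_0 = 0$ then $p$ has $z=0$ as a root and $\tilde p$ has smaller degree; so the clean statement needs $a_0 \neq 0$ as well, or one passes to the limit). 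I would handle the possible vanishing of $a_0$ by a density/continuity argument: the inequality to be proved is closed, and polynomials with $a_0 \neq 0$ are dense in $P$ for fixed degree, with both sides of the inequality continuous in the coefficients (roots and tropical roots depend continuously on the coefficients), so it suffices to prove it when $a_0 \neq 0$.

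Assuming $a_0 \neq 0$, order the roots of $\tilde p$ by nonincreasing absolute value: these are $1/\zeta_n, 1/\zeta_{n-1}, \dots, 1/\zeta_1$ (reversing the order, since $|\zeta_1| \geq \dots \geq |\zeta_n|$ gives $1/|\zeta_n| \geq \dots \geq 1/|\zeta_1|$). So the $k$ largest-modulus roots of $\tilde p$ are $1/\zeta_n, \dots, 1/\zeta_{n-k+1}$, whose product in absolute value is $1/|\zeta_{n-k+1} \dotsm \zeta_n|$. Similarly, the tropical roots of $\tilde p$: its max-times relative is $\tilde p^\times = \bigoplus_k |a_{n-k}| X^k$, whose Newton polygon is the reflection $j \mapsto n-j$ of that of $p^\times = \bigoplus_k |a_k| X^k$; hence the tropical roots of $\tilde p$ are the reciprocals of those of $p$. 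Ordering by nonincreasing value, the $k$ largest tropical roots of $\tilde p$ are $1/\alpha_n, \dots, 1/\alpha_{n-k+1}$, with product $1/(\alpha_{n-k+1} \dotsm \alpha_n)$.

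Now apply the hypothesis to $\tilde p \in P$ at index $k$: it gives
\[
\frac{1}{|\zeta_{n-k+1} \dotsm \zeta_n|} \leq f(k) \cdot \frac{1}{\alpha_{n-k+1} \dotsm \alpha_n} \; .
\]
Next I would invoke the $k = n$ case of the hypothesis, which reads $|\zeta_1 \dotsm \zeta_n| \leq f(n)\,\alpha_1 \dotsm \alpha_n$; but in fact the case $k=n$ of~\eqref{e-ost} / the hypothesis should be equality with $f(n) = 1$, since $|\zeta_1 \dotsm \zeta_n| = |a_0/a_n| = \alpha_1 \dotsm \alpha_n$ (the product of all roots equals $\pm a_0/a_n$, and the product of all tropical roots equals $|a_0/a_n|$ by Corollary~\ref{tropical_vieta}). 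Using $|\zeta_1 \dotsm \zeta_n| = \alpha_1 \dotsm \alpha_n$, I can rewrite $|\zeta_{n-k+1} \dotsm \zeta_n| = |\zeta_1 \dotsm \zeta_n| / |\zeta_1 \dotsm \zeta_{n-k}| = (\alpha_1 \dotsm \alpha_n)/|\zeta_1 \dotsm \zeta_{n-k}|$ and likewise $\alpha_{n-k+1} \dotsm \alpha_n = (\alpha_1 \dotsm \alpha_n)/(\alpha_1 \dotsm \alpha_{n-k})$. Substituting into the displayed inequality and cancelling the common factor $\alpha_1 \dotsm \alpha_n$ yields $|\zeta_1 \dotsm \zeta_{n-k}| \leq f(k)\, \alpha_1 \dotsm \alpha_{n-k}$, which is precisely the claim at index $n-k$ (replacing $k$ by $n-k$).

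The main obstacle is bookkeeping rather than depth: correctly tracking how the ordering of roots reverses under $z \mapsto 1/z$, making sure the reversed polynomial still lies in $P$ (hence the $a_0 \neq 0$ density reduction), and pinning down that the $k=n$ instance is an equality with constant $1$ so that the division step is legitimate. One should also double check the edge cases $k=0$ and $k=n$ of the conclusion, which are trivial. No genuinely hard analysis is involved once the reciprocal-polynomial dictionary is set up.
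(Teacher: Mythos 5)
Your proposal is correct and follows essentially the same route as the paper: reduce by density to $a_0\neq 0$, pass to the reciprocal polynomial $z^np(1/z)$, observe that its roots and tropical roots are the reciprocals (the Newton polygon being reflected), use the exact identity $|\zeta_1\dotsm\zeta_n|=|a_0/a_n|=\alpha_1\dotsm\alpha_n$ coming from Corollary~\ref{tropical_vieta} with $0$ a saturated index, and rearrange. The only difference is a harmless reindexing (you apply the hypothesis at index $k$ to conclude at index $n-k$, the paper does the reverse), and both versions treat the boundary index $k=n$ (where $f(0)$ appears) in the same implicit way.
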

\begin{proof}
Consider
a polynomial $p \in P, \; p(z) = a_n z^n + \dots + a_0$ 
	with roots $\zeta_1, \dots, \zeta_n$ (ordered by nonincreasing absolute value)
	and tropical roots $\alpha_1 \geq \dots \geq \alpha_n$.
Arguing by density, we may assume that $a_0\neq 0$. Then, 
we build its reciprocal polynomial $p^*(z) = z^n p(1/z) = a_0 z^n + \dots + a_n$.
	It is clear that the roots of $p^*$ are $\zeta_1^{-1}, \dots, \zeta_n^{-1}$.
	Moreover, its tropical roots are $\alpha_1^{-1} \leq \dots \leq \alpha_n^{-1}$:
	this can be easily proved by observing that the Newton polygon of $p*$
	is obtained from the Newton polygon of $p$ by symmetry with respect to a vertical axis,
	and thus it has opposite slopes.

	Since $p^* \in P$, by hypothesis we can bound its $n-k$ largest roots:
	\[
		\left| \frac{1}{\zeta_n} \dotsm \frac{1}{\zeta_{k+1}} \right|
		\leq f(n-k) \cdot \frac{1}{\alpha_n} \dotsm \frac{1}{\alpha_{k+1}} \; .
	\]
	By applying Corollary~\ref{tropical_vieta} (and observing that 
$0$ is a saturated index
for the max-times relative $p^\times$) we also have
	\[ |a_0| = |a_n| \alpha_1 \dotsm \alpha_n \; ,\]
	so we can write
	\[
	\begin{aligned}
		|\zeta_1 \dotsm \zeta_k| &= |\zeta_1 \dotsm \zeta_n| 
		\left| \frac{1}{\zeta_n} \dotsm \frac{1}{\zeta_{k+1}} \right| \\
		&= \frac{|a_0|}{|a_n|} \left| \frac{1}{\zeta_n} \dotsm \frac{1}{\zeta_{k+1}} \right| \\
		&\leq \frac{|a_0|}{|a_n|} \cdot f(n-k) \cdot \frac{1}{\alpha_n} \dotsm \frac{1}{\alpha_{k+1}} \\
		&= \alpha_1 \dotsm \alpha_n \cdot f(n-k) \cdot \frac{1}{\alpha_n} \dotsm \frac{1}{\alpha_{k+1}} 
		= f(n-k) \cdot \alpha_1 \dotsm \alpha_k
	\end{aligned}
	\]
\end{proof}

Therefore, it follows from the P\'olya's upper bound~\eqref{e-ost} that
 \[
  |\zeta_1 \dotsm \zeta_k| \leq \min\left(\sqrt{\frac{{(k+1)}^{k+1}}{k^k}},
  \sqrt{\frac{{(n-k+1)}^{n-k+1}}{{(n-k)}^{n-k}}} \right) \alpha_1 \dotsm \alpha_k,
 \]
 for all $k \in [n]$.
This is tighter than the bound derived from Theorem~\ref{upper_bound}
and Lemma~\ref{lem-explicit}. In the latter lemma, we used
a coarse estimation of the spectral radius, via norms. A finer
bound can be obtained by computing the true
spectral radius of $\ext^k_{\per} (\pat A)$ for the companion matrix $A$,
but numerical experiments indicate this still does not improve P\'olya's bound. This is perhaps not
surprising as the latter is derived by analytic functions techniques 
(Jensen inequality and Parseval identity), which do not naturally
carry over to the more general matrix case considered here.

\end{document}